\documentclass[11pt]{amsart}
\usepackage{amscd,amsfonts,amssymb,amsmath}
\usepackage[margin=3.4cm]{geometry}
\usepackage[all,cmtip]{xy}

\usepackage{amssymb}

\newtheorem{theorem}{Theorem}[section]
\newtheorem{cor}[theorem]{Corollary}
\newtheorem{lemma}[theorem]{Lemma}

\theoremstyle{definition}
\newtheorem{definition}[theorem]{Definition}

\theoremstyle{plain}
\numberwithin{equation}{section}

\def \Z{\mathbb Z}

\newcommand{\secref}[1]{Section~\ref{#1}}
\newcommand{\thmref}[1]{Theorem~\ref{#1}}
\newcommand{\lemref}[1]{Lemma~\ref{#1}}

\newcommand{\corref}[1]{Corollary~\ref{#1}}

\newcommand{\eqnref}[1]{~{\textrm(\ref{#1})}}
\numberwithin{equation}{subsection}

\begin{document}
\title[Commutator Subgroups of Singular Braid Groups]{Commutator Subgroups of Singular  Braid Groups}
\author[Soumya Dey]{Soumya Dey}
\author[Krishnendu Gongopadhyay]{Krishnendu Gongopadhyay}
 \address{Indian Institute of Science Education and Research (IISER) Bhopal, 
Bhopal Bypass Road, Bhauri
Bhopal 462 066
Madhya Pradesh, India} 
\email{soumya.sxccal@gmail.com} 
\address{Indian Institute of Science Education and Research (IISER) Mohali, Sector 81,  SAS Nagar, P. O. 
Manauli, Punjab 140306, India}
\email{krishnendu@iisermohali.ac.in, krishnendug@gmail.com} 
\subjclass[2010]{Primary 20F36; Secondary 20F12, 20F05}
\keywords{commutator subgroup, generalized virtual braid group, singular braid group, finite generation, perfectness}
\date{\today}

\begin{abstract}
The singular braids with $n$ strands, $n \geq 3$,  were introduced independently by Baez and Birman. It is known that the monoid formed by the singular braids is embedded in a group that is known as singular braid group, denoted by $SG_n$. There has been another generalization of braid groups, denoted by $GVB_n$, $n \geq 3$, which was introduced by Fang as a group of symmetries behind quantum quasi-shuffle structures. The group $GVB_n$ simultaneously generalizes the classical braid group, as well as the virtual braid group on $n$ strands. 

We investigate the commutator subgroups $SG_n'$ and $GVB_n'$ of these generalized braid groups.  We prove that $SG_n'$ is finitely generated if and only if $n \ge 5$, and $GVB_n'$ is finitely generated if and only if $n \ge 4$. Further, we show that both $SG_n'$ and $GVB_n'$ are perfect if and only if $n \ge 5$.
\end{abstract}
\maketitle

\section{Introduction}

 The commutator subgroup or derived subgroup of a group $G$ is the subgroup $G'$ generated by the elements of the form $x^{-1} y^{-1} x y$. A quotient group $G/N$ is abelian if and only if $G' \leq N$. Thus the group $G'$ distinguishes the abelian factor groups of $G$ from non-abelian ones. The quotient $G/G'$  is isomorphic to $H_1(G,\Z)$, the first homology group of $G$ with integer coefficients. So, given a group $G$, the structure of its commutator subgroup $G'$ reveals crucial information. 

\medskip Throughout the following, we shall assume that $n \ge 3$, unless stated otherwise. The braid group on $n$ strands $B_n$, classically known as Artin's braid group, is a central object of investigation due to their importance in several branches of mathematics,  e.g. refer surveys \cite{paris}, \cite{bibr}. Recall that \emph{Artin's braid group} $B_n$ is generated by a set of $(n-1)$ generators: $\{ ~ \sigma_i ~ | ~ i=1, 2, \ldots, n-1 ~ \}$, satisfying the following set of defining relations that we call as \emph{braid relations}: 
	$$\sigma_i \sigma_j=\sigma_j \sigma_i, \hbox{ if } |i-j|>1;$$
	$$\sigma_i \sigma_{i+1} \sigma_i=\sigma_{i+1} \sigma_i \sigma_{i+1}.$$ 
The commutator subgroup $B_n'$ of $B_n$ is well-known from the work of Gorin and Lin \cite{gl} who obtained a finite presentation for  $B_n'$. Simpler presentation for $B_n'$ was obtained by Savushkina \cite{sa}. Several authors have investigated commutators of  larger classes of spherical Artin groups, e.g.  \cite{zinde}, \cite{mr},  \cite{orevkov}.

\medskip There have been several generalizations of Artin's braid groups in the literature. Many of these generalized braid groups are of importance in their own rights, e.g.  \cite{bel1}, \cite{bel2}, \cite{dam}, \cite{vve2}, \cite{vve1}. In this paper, we consider two such families of generalized braid groups and investigate their commutator subgroups.  We briefly introduce them below and note our main results. The relationship between several families of braid groups has been discussed in \secref{prel} and sketched in the diagram \eqnref{gvbndiagram}.

\medskip  The geometric notion of singular braids was introduced independently by Baez in \cite{bae} and Birman in \cite{bir}. It was shown that such braids form a monoid. It is shown in \cite{fkr} that the Baez-Birman monoid on $n$ strands is embedded in a group $SG_n$ that is now known as \emph{singular braid group} on $n$ strands. The group $SG_n$ has a presentation that has the same set of generators and defining relations as the monoid-presentation for the Baez-Birman monoid, with the only additional property that the elements are invertible in $SG_n$. The \emph{singular braid group} $SG_n$ is generated by a set of $2(n-1)$ generators: $\{ ~ \sigma_i,~ \rho_i ~ | ~ i=1, 2, \ldots, n-1 ~ \}$, where $\sigma_i$  satisfy the braid relations as above, $\rho_i$ satisfy the commuting relations: 
\begin{equation}  \label{mm5} \rho_i \rho_j = \rho_j \rho_i, \hbox{ if } |i-j|>1; \end{equation}
and in addition there are the following mixed relations among $\sigma_i$, $\rho_i$: 
		 \begin{equation}  \label{mm2} \rho_i \sigma_{i+1} \sigma_i=\sigma_{i+1} \sigma_i \rho_{i+1};\end{equation}
	 \begin{equation}  \label{mm3} \rho_{i+1} \sigma_i \sigma_{i+1}=\sigma_i \sigma_{i+1} \rho_i;\end{equation}
 \begin{equation} \label{mm1} \sigma_i \rho_j = \rho_j \sigma_i, \hbox{ if } i=j \hbox{ or } |i-j|>1. \end{equation}

Singular braids are related to finite type invariants of knots and links, e.g. \cite[Chapter 3]{bel} and it is a natural problem to investigate their algebraic and geometric properties to understand these invariants. Investigation of topological properties of singular braids was initiated in \cite{vve0}.  We refer to \cite{vve2, vve1, bel} for more on singular braid groups and also other generalized braid groups. We prove the following. 

\begin{theorem}\label{sgnmainth}
	 Let $SG_n'$ denote the commutator subgroup of the singular braid group $SG_n$.
	
	\begin{itemize}
		
		\item[(i)] $SG_n'$ is finitely generated for all  $~ n \ge 5$. Further, for $~ n \ge 5$, the rank of $SG_n'$ is at most $2n - 4$.
		
		\medskip \item[(ii)] $SG_3'$ and $SG_4'$ are not finitely generated.
		
		\medskip \item[(iii)] $SG_n'$ is perfect if and only if $n \ge 5$.
	\end{itemize}
\end{theorem}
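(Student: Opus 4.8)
The plan is to realize $SG_n'$ as an explicit kernel, reduce the whole problem to a single finite set $S$, and then analyze $S$ by Reidemeister--Schreier. First I would compute $SG_n^{ab}$: every defining relation preserves the $\sigma$-exponent sum and the $\rho$-exponent sum separately, the braid relation forces all $[\sigma_i]$ to coincide and \eqref{mm2} forces all $[\rho_i]$ to coincide, and nothing relates or torts these two classes, so $SG_n^{ab}\cong\Z^2$ with $\phi\colon SG_n\to\Z^2$, $\sigma_i\mapsto(1,0)$, $\rho_i\mapsto(0,1)$, and $SG_n'=\ker\phi$. Then set $S=\{\,u_i=\sigma_{i+1}\sigma_i^{-1},\ v_i=\rho_{i+1}\rho_i^{-1}\ :\ 1\le i\le n-2\,\}\subseteq SG_n'$, of size $2n-4$. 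Modulo $\langle\!\langle S\rangle\!\rangle$ all $\sigma_i$ collapse to a single element $\bar\sigma$, all $\rho_i$ to a single element $\bar\rho$, and the relation $\sigma_i\rho_i=\rho_i\sigma_i$ in \eqref{mm1} makes them commute; hence $SG_n/\langle\!\langle S\rangle\!\rangle$ is abelian, which together with $S\subseteq SG_n'$ gives $SG_n'=\langle\!\langle S\rangle\!\rangle$. Everything now turns on whether this normal closure is an ordinary finitely generated subgroup, and whether it is perfect.

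For part (i) with $n\ge5$ I would prove the stronger statement that $\langle S\rangle$ is already normal, so that $\langle S\rangle=\langle\!\langle S\rangle\!\rangle=SG_n'$ and the rank is at most $2n-4$. This reduces to checking $g\,s\,g^{-1}\in\langle S\rangle$ for $g\in\{\sigma_k^{\pm1},\rho_k^{\pm1}\}$ and $s\in S$. When $k$ lies outside the band $\{i-1,i,i+1,i+2\}$ the commuting relations \eqref{mm5}, \eqref{mm1} and the far braid relation make the conjugation act trivially; when $k$ is adjacent I would rewrite $gsg^{-1}$ using the braid relation and the mixed relations \eqref{mm2}, \eqref{mm3}, and re-express the resulting word in the $u_j,v_j$. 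This closing-up is the crux of the whole theorem, and it is exactly here that $n\ge5$ enters: absorbing the band around index $i$ requires a spectator generator at distance at least $2$ on the appropriate side, which exists only once $n\ge5$. I expect a finite case analysis on the position of $k$ relative to $i$ to finish it.

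For the $n\ge5$ half of part (iii) I would show $SG_n'=SG_n''$ by realizing each of the finitely many generators $u_i,v_i$ as a product of commutators of $\phi$-trivial elements. Rewriting \eqref{mm2} gives $v_i=(\sigma_{i+1}\sigma_i)^{-1}\rho_i(\sigma_{i+1}\sigma_i)\rho_i^{-1}$, which only exhibits $v_i$ as a commutator in $SG_n$ whose outer factor lies outside $\ker\phi$; the point is to trade that factor for a $\phi$-trivial one using a far generator, and likewise to present $u_i$ through the braid relation, both of which the surplus of indices for $n\ge5$ permits. Equivalently, one verifies that the Reidemeister--Schreier relations among the $u_j,v_j$ already kill the abelianization $SG_n'/SG_n''$.

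Finally, for part (ii) and the remaining direction of (iii) I would treat $n=3,4$ by computing $H_1(SG_n')=SG_n'/SG_n''$ directly. With the Schreier transversal $\{\,\sigma_1^a\rho_1^b:(a,b)\in\Z^2\,\}$, Reidemeister--Schreier produces generators of $SG_n'$ indexed by $(a,b)$ together with relations read off from the defining relators; on abelianizing, $SG_n'/SG_n''$ becomes a module over $\Z[s^{\pm1},r^{\pm1}]$ on which $s,r$ act by shifting $(a,b)$. For $n=3,4$ the shortage of commuting relations leaves this module infinitely generated as an abelian group --- concretely, one exhibits an infinite family of shifted Schreier generators that stays independent in $SG_n'/SG_n''$ --- so $SG_n'$ is neither finitely generated nor perfect. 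The delicate point here, mirroring the obstacle above, is to prove that no relation collapses this family when $n\le4$, in sharp contrast to the collapse forced by the far-commutation that first becomes available at $n=5$.
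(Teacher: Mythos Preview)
Your plan is sound and would prove the theorem, but for part~(i) it follows a genuinely different route from the paper. The paper runs the full Reidemeister--Schreier procedure with the transversal $\{\sigma_1^m\rho_1^k\}$, obtains an explicit infinite presentation of $SG_n'$, and then performs a sequence of Tietze moves driven by the rewritten relations (notably \eqref{gvbn08}, \eqref{gvbn04}, \eqref{gvbn06}, \eqref{gvbn07}, \eqref{gvbn010}) to whittle the generating set down to $\{\alpha_{0,0,2},\alpha_{1,0,2},\alpha_j,\beta_{0,j}:3\le j\le n-1\}$, a set of $2n-4$ elements. Your approach guesses the finite set $S=\{u_i,v_i\}$ in advance and proves normality directly by the spectator trick. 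This is cleaner and more conceptual, but you should be aware that the na\"ive spectator argument breaks for the middle index $i=2$ when $n=5$: no single $\sigma_j$ or $\rho_j$ commutes with $u_2$. The fix is easy --- reduce conjugation of $u_2$ to conjugation of $u_1$ via the identity $\sigma_1(u_2u_1)\sigma_1^{-1}=u_2u_1$ (and its $\rho$ and $v$ analogues), since $u_2u_1=\sigma_3\sigma_1^{-1}$ visibly commutes with $\sigma_1$ and $\rho_1$ --- but your write-up should flag and handle this case explicitly rather than leaving it inside ``a finite case analysis''. Once this is done, your $2n-4$ bound matches the paper's.

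For perfectness when $n\ge5$ and for the small cases $n=3,4$, your outline is essentially the paper's: abelianise the Reidemeister--Schreier presentation and read off the result. One shortcut in the paper worth borrowing is that it does \emph{not} redo the computation for $n=4$; instead it observes that killing $\alpha_3$ and the $\beta_{m,3}$ in the presentation of $SG_4'$ yields exactly the presentation of $SG_3'$, so $SG_3'$ is a quotient of $SG_4'$, and non-finite-generation and non-perfectness of $SG_4'$ follow immediately from the $n=3$ case. This is quicker than your proposed direct module calculation for $n=4$.
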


\medskip  We consider another family of generalized braid groups, that was introduced by Fang in \cite{fang}. He constructed this generalization as a group of symmetries behind quantum quasi-shuffle structures. Fang's \emph{generalized virtual braid group} $GVB_n$ is generated by a set of $2(n-1)$ generators: $\{ ~ \sigma_i,~ \rho_i ~ | ~ i=1, 2, \ldots, n-1 ~ \}$, where $\sigma_i$ satisfy braid relations, $\rho_i$ also satisfy braid relations, and in addition there are mixed relations \eqnref{mm2},  \eqnref{mm3} as above, and 
 \begin{equation} \label{mm4} \sigma_i \rho_j = \rho_j \sigma_i, \hbox{ if } |i-j|>1.  \end{equation}
The group $GVB_n$ simultaneously generalizes both Artin's braid group $B_n$ and the virtual braid group $VB_n$. We prove the following. 

\begin{theorem}\label{gvbnmainth} Let $GVB_n'$ denote the commutator subgroup of the generalized virtual braid group $GVB_n$.

	\begin{itemize}
		
		\item[(i)] $GVB_n'$ is finitely generated for all  $~ n \ge 4$. Further, for $n \ge 5$, the rank of $GVB_n'$ is at most $3n - 7$. 
		
		\medskip \item[(ii)] $GVB_3'$ is not finitely generated.

		\medskip \item[(iii)] $GVB_n'$ is perfect if and only if $n \geq 5$. 
	\end{itemize}
\end{theorem}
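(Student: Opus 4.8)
The plan is to compute $GVB_n^{\mathrm{ab}} = GVB_n/GVB_n'$ first, since abelianization is the master calculation that governs all three parts. Abelianizing the presentation, the braid relations among the $\sigma_i$ force all $\sigma_i$ to a single class $\bar\sigma$, the braid relations among the $\rho_i$ force all $\rho_i$ to a single class $\bar\rho$, and the mixed relations \eqnref{mm2}, \eqnref{mm3} become trivial once everything commutes. Hence $GVB_n^{\mathrm{ab}} \cong \Z^2$, generated by $\bar\sigma$ and $\bar\rho$, and the abelianization map $\pi\colon GVB_n \to \Z^2$ sends each $\sigma_i \mapsto (1,0)$ and each $\rho_i \mapsto (0,1)$. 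This gives $GVB_n' = \ker\pi$ cleanly.

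Next I would attack perfectness, part (iii), because it is the cleanest and informs the generating-set strategy. To show $GVB_n'$ is \emph{perfect} for $n \ge 5$, the plan is to exhibit an explicit finite generating set for $GVB_n'$ (using a Reidemeister--Schreier rewriting with Schreier transversal $\{\sigma_1^a \rho_1^b : a,b \in \Z\}$ for the index-$\infty$ subgroup $\ker\pi$) and then verify that each Schreier generator lies in $GVB_n'' = (GVB_n')'$. The natural candidates are commutator-type elements such as $\sigma_i\sigma_j^{-1}$, $\rho_i\rho_j^{-1}$, and $\sigma_i\rho_i^{-1}$ for appropriate $i,j$; the braid relations let me write each of these as a genuine commutator in $GVB_n'$ (for instance, exploiting that $\sigma_i$ and $\sigma_{i+2}$ commute while the far-apart generators and the mixed relations supply enough ``room'' to realize the differences as commutators). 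The role of the hypothesis $n \ge 5$ is that one needs indices that are pairwise far apart (differing by more than $1$) to invoke the commuting relations \eqnref{mm4} and \eqnref{mm5}, and only for $n \ge 5$ are there enough strands for these constructions to close up; for $n = 3,4$ the abelianized image of $GVB_n'/GVB_n''$ will be nontrivial, which is what part (ii) and the failure of perfectness at $n=4$ detect.

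For the finite-generation bounds, parts (i) and the non-finite-generation in (ii), the plan is a Reidemeister--Schreier computation combined with a Majumdar--Morita-type argument to collapse the a priori infinite Schreier generating set to a finite one. Concretely, the conjugates $\sigma_1^{-a}\rho_1^{-b}\,(\sigma_i\sigma_1^{-1})\,\sigma_1^a\rho_1^b$ and similar elements generate $\ker\pi$, and the defining relations of $GVB_n$, rewritten via Reidemeister--Schreier, should express all but finitely many of these in terms of a bounded set, yielding the rank bound $3n-7$ for $n \ge 5$. The contrast between $n \ge 4$ (finitely generated) and $n = 3$ (not) comes from whether the rewriting rules allow one to ``shift'' the exponents $a,b$ back into a bounded window: for $n = 3$ there are too few commuting pairs, so the family of conjugates does not collapse and one obtains an infinite-rank obstruction, detectable by mapping onto a free abelian group of infinite rank.

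I expect the main obstacle to be the Reidemeister--Schreier bookkeeping: organizing the Schreier generators and systematically applying each defining relation to prove that the infinite family collapses to exactly the claimed finite rank (and, crucially, proving the \emph{sharp} non-finite-generation at the boundary cases $n=3$). The perfectness direction is comparatively routine once the finite generating set is in hand, but the rank bounds $3n-7$ and $2n-4$ require a careful count showing no further relations are missed and none are redundant, which is where the bulk of the technical effort will lie.
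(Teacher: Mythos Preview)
Your overall plan---compute $GVB_n^{\mathrm{ab}}\cong\Z^2$, run Reidemeister--Schreier over the transversal $\{\sigma_1^a\rho_1^b\}$, then collapse the infinite Schreier generating set by Tietze moves---is exactly the paper's approach, and your diagnosis that $n\ge5$ is needed because one must have indices at distance $>1$ (to invoke the commuting relations) is on the mark. Two small corrections: $\sigma_i\rho_i^{-1}$ is not in $\ker\pi$ (it maps to $(1,-1)$), so it is not among the candidate generators; and the bound $2n-4$ you mention belongs to $SG_n'$, not to this theorem.

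For perfectness when $n\ge5$, the paper does not attempt to express each Schreier generator as an explicit commutator as you propose. Instead it abelianizes the full Reidemeister--Schreier presentation of $GVB_n'$ and checks, relation by relation, that every generator is forced to be trivial in $(GVB_n')^{\mathrm{ab}}$. This is more mechanical and avoids the ad hoc search for commutator expressions; your route would work in principle but is harder to organize.

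The one place your plan is genuinely thin is non-perfectness for $n=4$. You suggest computing $(GVB_4')^{\mathrm{ab}}$ directly, but since $GVB_4'$ \emph{is} finitely generated, this abelianization could well be finite or even trivial without further information, and the direct computation from the Reidemeister--Schreier relations is unpleasant. The paper bypasses this entirely: it uses that $WB_n$ is a quotient of $GVB_n$, that adorability (existence of some $i$ with $G^{(i)}=G^{(i+1)}$) passes to quotients, and that $WB_3$ and $WB_4$ are known from \cite{dg1} not to be adorable. Hence $GVB_3$ and $GVB_4$ are not adorable, so in particular $GVB_3'$ and $GVB_4'$ are not perfect. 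This quotient-to-$WB_n$ trick is the key idea you are missing for the $n=4$ case.
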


Recall that the virtual braid group $VB_n$ on $n$ strands is a certain generalization of the classical braid group $B_n$ . It was introduced by L.~Kauffman in \cite{lk1},  also see \cite{kala}, \cite{ba}. Virtual braids are  subject of curiosity due to their connection with knot invariants, see \cite{fkv}. Recently commutator subgroups of the virtual braid groups $VB_n$ have been investigated in \cite{bgn1}.  
\thmref{gvbnmainth}  shows that algebraically $GVB_n'$ has similar properties as compared to $VB_n'$, as obtained in \cite{bgn1}. On the other hand,  \thmref{sgnmainth}  shows a contrast between $SG_n'$ and the groups $VB_n'$ and $GVB_n'$, if we compare finite generation for $n=4$.

\medskip The proofs of the theorems are obtained by application of the Reidemeister-Schreier algorithm. 
 Similar tools have been used in \cite{bgn1} and \cite{dg1} to obtain finite generations of the virtual and the welded braid groups.  However, there are certain technical differences between the approaches of this paper and the above ones.  We explain it below. 

\medskip Note that the initial presentation for $VB_n'$ in \cite{bgn1}, or $WB_n'$  in \cite{dg1}, had generating set indexed by $\Z \times \Z_2$. Due to the presence of the simple order two group $\Z_2$ in the index set, the elimination process in \cite{bgn1} or \cite{dg1} was an one-variable process.  On the other hand, the initial generating sets  for both $GVB_n'$ and $SG_n'$ obtained by the Reidemeister-Schreier algorithm are indexed by $\Z \times \Z$. Consequently, the elimination process of the generators is a two-variable process here and seemingly more complicated. However, as we shall see,  this complexity can be handled successfully and we can avoid this technical difficulty to prove the main theorems.

\medskip \thmref{sgnmainth}  and  \thmref{gvbnmainth} may be viewed as the first steps towards understanding of the finiteness properties of $SG_n$ and $GVB_n$ respectively. Using the method used in this paper, we have been unable to produce information about other finiteness properties of $GVB_n$ and $SG_n$; it would be a curious problem to study the same. We note here that the finiteness properties of $VB_n$ is also unknown, except for an analogous result to \thmref{gvbnmainth} in \cite{bgn1}. However, finiteness properties of the welded braid group $WB_n$ have been studied by Zaremsky in \cite{mz}. In particular finite presentability of $WB_n'$ follows from Zaremsky's work, though explicit finite presentation for $WB_n'$ has not been obtained so far.

\medskip Bardakov, in \cite{ba1}, introduced another generalization of braid groups which he called \emph{universal braid groups} and denoted by $UB_n$. The group $UB_n$ is generated by a set of $2(n-1)$ generators: $\{ ~ \sigma_i,~ \rho_i ~ | ~ i=1, 2, \ldots, n-1 ~ \}$, where $\sigma_i$  satisfy the braid relations, and in addition there are relations \eqref{mm5}, \eqref{mm4} as defining relations. The speciality of $UB_n$ is that it is an Artin group and there are surjective homomorphisms from $UB_n$ onto all the other generalized braid groups, i.e. $GVB_n$, $SG_n$, $VB_n$ and $WB_n$. 

\medskip Since commutator subgroup is a verbal subgroup, a surjective homomorphism between two groups restricts to a surjective homomorphism between the commutator subgroups. In view of \thmref{sgnmainth}, and the fact that finite generation is quotient-closed property of groups, the following corollary follows immediately.

\begin{cor} \label{corubn}
Commutator subgroups of the universal braid groups $UB_3$ and $UB_4$ are not finitely generated. 
\end{cor}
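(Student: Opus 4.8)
The plan is to deduce everything from \thmref{sgnmainth}(ii) by pushing the failure of finite generation through the canonical surjections. Recall from the discussion preceding the statement that there is a surjective homomorphism $UB_n \twoheadrightarrow SG_n$ for every $n$. The two structural facts I would invoke are: (1) the commutator subgroup is a \emph{verbal} subgroup, so any surjective homomorphism $\phi \colon G \twoheadrightarrow H$ carries $G'$ onto $H'$, giving a surjective restriction $\phi|_{G'} \colon G' \twoheadrightarrow H'$; and (2) a quotient of a finitely generated group is again finitely generated. Taking the contrapositive of (2), if $H'$ is not finitely generated and $G' \twoheadrightarrow H'$, then $G'$ cannot be finitely generated either.

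Concretely, I would carry out the argument in the following order. First, fix $n \in \{3,4\}$ and take the surjection $UB_n \twoheadrightarrow SG_n$; by fact (1) it restricts to a surjection $UB_n' \twoheadrightarrow SG_n'$. Second, invoke \thmref{sgnmainth}(ii), which asserts that $SG_3'$ and $SG_4'$ are \emph{not} finitely generated. Third, apply fact (2) in contrapositive form: since $UB_n'$ surjects onto the non-finitely-generated group $SG_n'$, the group $UB_n'$ itself cannot be finitely generated. Running this for $n=3$ and $n=4$ yields both assertions of the corollary.

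It is worth noting that for $UB_3$ one could equally well route the argument through $GVB_3$, using the surjection $UB_3 \twoheadrightarrow GVB_3$ together with \thmref{gvbnmainth}(ii). For $UB_4$, however, this alternative is unavailable, since \thmref{gvbnmainth}(i) shows $GVB_4'$ \emph{is} finitely generated; hence the singular braid quotient $SG_4$ is the essential input in the $n=4$ case, and \thmref{sgnmainth}(ii) is exactly what makes the corollary go through.

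As for the main obstacle: there genuinely is none at this stage — all the substantive work lives in \thmref{sgnmainth}(ii), and the corollary is a formal consequence. The only point requiring (minimal) verification is that the restriction $UB_n' \twoheadrightarrow SG_n'$ is indeed surjective, which is precisely the verbal-subgroup observation in fact (1) and needs no computation.
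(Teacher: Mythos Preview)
Your proposal is correct and mirrors the paper's own argument almost verbatim: the paper deduces the corollary immediately from \thmref{sgnmainth}(ii) using the surjection $UB_n \twoheadrightarrow SG_n$, the verbal-subgroup fact that this restricts to $UB_n' \twoheadrightarrow SG_n'$, and that finite generation is quotient-closed. Your additional remark that routing through $GVB_n$ works for $n=3$ but not $n=4$ is a nice observation not made explicit in the paper.
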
 

The following table summarizes the state of art concerning finite generation, finite presentability and perfectness of the commutator subgroups of different braid groups those are mentioned in this paper. 
\begin{table}[h!]
	\begin{center}
		\label{table1}
		\begin{tabular}{|c|c|c|c|}
			\hline
			&\textbf{Finitely generated} & \textbf{ \ Perfect \ } & \textbf{Finitely presented}\\ 
			
			\hline
			$B_n'$ & for all $n$ (\cite{gl})& iff $ ~ n \ge 5$ (\cite{gl}) & for all $n$ (\cite{gl})\\ 
			\hline
			$VB_n'$ & iff $~ n \ge 4$ (\cite{bgn1}) & iff $ ~ n \ge 5$ (\cite{bgn1}) & \textbf{?}\\
			\hline
			$WB_n'$ & for all $n$ (\cite{dg1})& iff $~ n \ge 5$ (\cite{dg1})& iff $n \ge 4$ (\cite{mz})\\
			\hline
			$GVB_n'$ & iff $~ n \ge 4$ (\thmref{gvbnmainth})& iff $~ n \ge 5$ (\thmref{gvbnmainth})& \textbf{?}\\
			\hline
			$SG_n'$ & iff $~ n \ge 5$ (\thmref{sgnmainth})& iff $n \ge 5$ (\thmref{sgnmainth}) & \textbf{?}\\
            \hline
			& Not f.g. for $n=3,4$  &&\\ $UB_n'$ & (\corref{corubn}); for $n \geq 5~$\textbf{?} & \textbf{?}  & \textbf{?}\\

			\hline
		
		\end{tabular}
	\end{center}
\end{table} 
 
\subsubsection*{Structure of the paper}  In \secref{prel}, we recall relationship between several braid groups using their group presentations and briefly describe the Reidemeister-Schreier algorithm. In \secref{gen}, we compute a set of generators for both $GVB_n'$ and $SG_n'$.  In \secref{rs}, we obtain a presentation for $GVB_n'$ using the Reidemeister-Schreier algorithm. Using this presentation, in  \secref{thm1}, we prove   \thmref{gvbnmainth}.  In \secref{sgn}, we obtain a presentation for $SG_n'$ by the Reidemeister-Schreier algorithm and use that to  prove \thmref{sgnmainth}. 

\section{Preliminaries}\label{prel}

\subsection{Relationship between $SG_n$, $GVB_n$, $B_n$, $VB_n$ and $WB_n$} Let $S_n$, $B_n$, $VB_n$ and $WB_n$ denote the symmetric group on $n$ letters, Artin's braid group on $n$ strands, virtual braid group on $n$ strands and welded braid group on $n$ strands, respectively.  Recall that the \emph{symmetric group} $S_n$ is generated by a set of $(n-1)$ generators: $\{ ~ \sigma_i ~ | ~ i=1, 2, \ldots, n-1 ~ \}$, satisfying the following set of defining relations:
$$\sigma_i^2 = 1;$$
$$\sigma_i \sigma_j=\sigma_j \sigma_i, \hbox{ if } |i-j|>1;$$
$$\sigma_i \sigma_{i+1} \sigma_i=\sigma_{i+1} \sigma_i \sigma_{i+1}.$$ 

The \emph{virtual braid group} $VB_n$ is generated by a set of $2(n-1)$ generators: $\{ ~ \sigma_i,~ \rho_i ~ | ~ i=1, 2, \ldots, n-1 ~ \}$, where $\rho_i$ satisfy the braid relations, $\sigma_i$ satisfy the symmetric group relations, and there are the following {\it mixed defining relations}: $$\sigma_i \rho_j =\rho_j \sigma_i, \hbox{ if } |i-j|>1;$$
	$$\rho_{i+1} \sigma_i \sigma_{i+1} = \sigma_i \sigma_{i+1} \rho_i.$$
	
The \emph{welded braid group} $WB_n$ is a quotient of $VB_n$ obtained by including the following defining relations in the above presentation for $VB_n$: 
	$$\sigma_i \rho_{i+1} \rho_i = \rho_{i+1} \rho_i \sigma_{i+1}.$$

\medskip 

The relationship between the groups $~ S_n$, $~ B_n$, $~ VB_n$, $~ WB_n $, $~ GVB_n ~$, $ ~ SG_n ~ $ and $UB_n$ is given by the following diagram of surjective homomorphisms:
\begin{equation}\label{gvbndiagram}
	\vcenter{
		\xymatrix@R=2ex{
			& {GVB_n} \ar[dd]_{\gamma} \ar[rdd]^{\alpha} & {UB_n} \ar[r]^{\kappa} \ar[l]_{\xi} & {SG_n} \ar[ldd]^{\omega} \\
			&&&& \\
			{WB_n} & {VB_n} \ar[l]_{\zeta} \ar[dd]_{\delta}
			&{B_n} \ar[ldd]^{\beta} &\\
			&&&& \\
			& {S_n} &
		}
	}
\end{equation}

\noindent where $\alpha$, $\beta$, $\gamma$, $\delta$, $\omega ~$ are quotient maps defined by the normal subgroups generated by the words $\{\rho_i\}$, $\{\sigma_i^2\}$, $\{\sigma_i^2\}$, $\{\rho_i\}$, $\{\rho_i\}$ respectively; $~ \zeta$ is the quotient map defined by the normal subgroup generated by the forbidden relators; $~ \xi$ is the quotient map defined by the normal subgroup generated by the relators in \eqref{mm2}, \eqref{mm3}, and the relators $ \{ \rho_i \rho_{i+1} \rho_i \rho_{i+1}^{-1} \rho_i^{-1} \rho_{i+1}^{-1} \} $; $~ \kappa$ is the quotient map defined by the normal subgroup generated by the relators in \eqref{mm2}, \eqref{mm3}, and the relators $ \{ \sigma_i \rho_i \sigma_i^{-1} \rho_i^{-1} \}.$

Note that the map $\gamma : VB_n \rightarrow S_n$ defined above is different from the map from $VB_n$ to $S_n$ considered while defining {\em pure virtual braid groups}, e.g. in \cite[Section 5]{bb}.

\subsection{The Reidemeister-Schreier algorithm}\label{rsa}

\begin{definition}
	Let $G$ be a group and $H$ be a subgroup of $G$. Suppose that, $ \langle ~ S ~ | ~ R ~ \rangle $ be a presentation for $G$. A set $\Lambda$ consisting of some words in the generators from $S$ is called a \textit{Schreier set of coset representatives} for $H$ in $G$ if \\
	(i) every right coset of $H$ in $G$ contains exactly one word from $\Lambda$, and\\
	(ii) for each word in $\Lambda$ any initial segment of that word is also in $\Lambda$.
\end{definition}

\begin{theorem}[Schreier, 1927]
	For every $H \le G$, a Schreier set of coset representatives for $H$ in $G$ exists.
\end{theorem}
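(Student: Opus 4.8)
The plan is to construct $\Lambda$ explicitly as the set of minimal coset representatives under a well-chosen well-ordering of words, and then to check the two Schreier conditions by hand; the second condition is where the real work lies.

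First I would fix a well-ordering of the alphabet $S \cup S^{-1}$ (this exists for any set, and is trivial when $S$ is finite, as it is for the braid groups considered here) and promote it to the \emph{ShortLex} ordering on the set of all words: declare $u < v$ when $u$ is shorter than $v$, and when $u$ and $v$ have the same length, compare them lexicographically. A routine check shows ShortLex is itself a well-ordering: lengths are nonnegative integers, so any descending chain has eventually constant length, and on words of a fixed length the lexicographic order over a well-ordered alphabet admits no infinite descent. Now, for each right coset $C$ of $H$, the words representing elements of $C$ form a nonempty set, which therefore has a unique ShortLex-least member $\lambda(C)$; I set $\Lambda = \{\, \lambda(C) : C \text{ a right coset of } H \,\}$.

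Condition (i) would then be immediate: each coset $C$ contains $\lambda(C)$, and if some $w \in \Lambda$ also lies in $C$, then $w = \lambda(C')$ for the unique coset $C'$ containing $w$; since $C' = C$ we get $w = \lambda(C)$, so $C$ meets $\Lambda$ in exactly one word. The crux is Condition (ii), prefix-closedness. The hard part --- really the only genuinely load-bearing step --- will be the observation that ShortLex is compatible with appending a letter on the right: if $u < v$, then $us < vs$ for every $s \in S \cup S^{-1}$, because appending $s$ preserves the length comparison and, at equal lengths, leaves the first lexicographic disagreement in place. Granting this, suppose $w = w' s \in \Lambda$ with $w'$ a proper prefix; if $w'$ were not $\lambda(Hw')$, there would be a strictly smaller representative $v < w'$ of the coset $Hw'$, and then $vs < w's = w$ would represent the same element as $w$ in $Hw = Hw's$, contradicting the minimality defining $w = \lambda(Hw)$. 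Hence $w' \in \Lambda$, and peeling off final letters one at a time (an induction on word length, anchored by the empty word representing the coset $H$) shows every initial segment of $w$ lies in $\Lambda$.

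I expect no serious obstacle beyond bookkeeping: once the append-compatibility of ShortLex is isolated, prefix-closedness drops out of the one-line contradiction above, and the existence of minimal representatives is guaranteed by ShortLex being a well-ordering. The only care needed is to phrase everything in terms of words over $S \cup S^{-1}$ (so that group inverses are available), and to remember that the empty word is the chosen representative of $H$ itself, which both lands $\Lambda$ at the bottom of the prefix induction and confirms that the trivial coset $H$ is represented.
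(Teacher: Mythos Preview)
Your argument is correct and is essentially the standard proof of Schreier's theorem: choose ShortLex-minimal coset representatives over a well-ordered alphabet, and use right-compatibility of ShortLex to force prefix-closure. The one point worth stating slightly more carefully is that when you write $w = w's$, you are taking $w$ as a word (not merely a group element), so $w'$ is the honest word obtained by deleting the last letter; since $w$ is ShortLex-minimal it is in particular freely reduced, and the contradiction with $vs < w$ goes through whether or not $vs$ is reduced, because you are comparing words, not normal forms. You have this right, but it is the place where readers sometimes stumble.

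As for comparison with the paper: there is nothing to compare. The paper does not prove this theorem. It is stated as a classical result attributed to Schreier (1927), immediately after the definition of a Schreier set, and the authors then proceed directly to describe the Reidemeister--Schreier algorithm, citing \cite{mks} for details. So your proposal supplies a proof where the paper offers none; the approach you take is the standard one found in textbooks such as Magnus--Karrass--Solitar.
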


We describe the Reidemeister-Schreier method as an algorithm, which provides an effective way to deduce a presentation for a subgroup of a group with a given presentation. Let $H \le G$, and $ \langle ~ S ~ | ~ R ~ \rangle $ be a presentation for $G$. In order to deduce a presentation for $H$ we proceed as follows.

\underline{Step 1}: Find $\Lambda$, a Schreier set of coset representatives for $H$ in $G$. One may use the Todd-Coxeter algorithm to find such a Schreier set,{\tiny } if $[G:H] < \infty$.\\

\underline{Step 2}: Deduce a set of generators $\{ ~ S_{\lambda, a} ~ | ~ \lambda \in \Lambda, ~ a \in S ~ \}$ for $H$ defined by:
$$S_{\lambda, a}=(\lambda a) (\overline{\lambda a})^{-1},$$
where for any $x \in G, ~$ $\overline{x} \in \Lambda$ denotes the unique element in $\Lambda \cap Hx$.\\

\underline{Step 3}: If for some $\lambda \in \Lambda$ and $a \in S$, the words $\lambda a$ and $\overline{\lambda a}$ are freely equal, then collect the relations $S_{\lambda, a}=1$ as part of the set of defining relations for $H$.\\

\underline{Step 4}: Compute the remaining defining relators $~ \tau (\lambda ~ r_{\mu} ~ \lambda^{-1}) ~$ for $H$,
for all $\lambda \in \Lambda$, and all the defining relators $r_{\mu} \in R$, where $\tau$, called the \emph{rewriting process}, is defined as follows. For a word $a_{i_1}^{\epsilon_1} \dots a_{i_p}^{\epsilon_p}$ in the generators from $S$, with $\epsilon_j = 1$ or $-1 ~$ for $1 \le j \le p$,
$$\tau( ~ a_{i_1}^{\epsilon_1} ~ \dots ~ a_{i_p}^{\epsilon_p} ~ ) ~ := ~ S_{K_{i_1},a_{i_1}}^{\epsilon_1} ~ \dots ~ S_{K_{i_p},a_{i_p}}^{\epsilon_p}, $$

$$ \hbox{ where } ~ K_{i_j} =  \begin{cases} 
\overline{a_{i_1}^{\epsilon_1} \dots a_{i_{j-1}}^{\epsilon_{j-1}}} & \hbox{ if } \epsilon_j = 1 , \\

\hbox{   } \\

\overline{a_{i_1}^{\epsilon_1} \dots a_{i_j}^{\epsilon_j}} & \hbox{ if } \epsilon_j = -1 .
\end{cases}$$\\
The above algorithm will produce a presentation $ \langle ~ \overline{ S } ~ | ~ \overline{ R } ~ \rangle $ for $H$ where we have $\overline{ S } = \{  ~ S_{\lambda, a} ~ | ~ \lambda \in \Lambda, ~ a \in S ~ \}$ and $\overline{ R } = \{ ~ S_{\lambda_t, a_t} ~ | ~ \lambda_t \in \Lambda,~ a_t \in S,~ \lambda_t a_t \hbox{ freely equal to } \overline{ \lambda_t a_t } ~ \} ~ \cup $ $~ \{ ~ \tau(\lambda ~ r_{\mu} ~ \lambda^{-1}) ~ | ~ \lambda \in \Lambda,~ r_{\mu} \in R ~ \} $.\\

Refer \cite[Theorem 2.9]{mks} for detailed explanation and proofs.

\section{ A set of generators for $GVB_n'$ and $SG_n'$} \label{gen} 
Let $G_n=GVB_n$ or $SG_n$. In order to deduce a set of generators for $G_n'$, we will execute Step 1 and Step 2 of the Reidemeister-Schreier algorithm as described in \secref{rsa}.

Define the map $\phi$: 
\begin{equation*}1 \xrightarrow {} G_n' \xrightarrow{} G_n \xrightarrow{\phi} \Z \times \Z \xrightarrow{} 1\end{equation*}
where, for $i=1, \ldots, n-1$, $\phi(\sigma_i)=\widetilde{\sigma_1}$ , $\phi(\rho_i)=\widetilde{\rho_1}$; here  $\widetilde{\sigma_1}$ and $\widetilde{\rho_1}$ are the generators of the 2 copies of $\Z$. Here, Image($\phi$) is isomorphic to the abelianization of $G_n$, denoted as $G_n^{ab}$. To verify this, we abelianize the above presentation for $G_n$ by inserting the relations $ ~ xy=yx ~ $ in the presentation for all $x,y \in \{ ~ \sigma_i, \rho_i ~ | ~ 1 \le i \le n-1 \ \} $. The resulting presentation is the following:
$$ G_n^{ab} = ~ < \sigma_1, \rho_1 ~ | ~ \sigma_1 \rho_1 = \rho_1 \sigma_1 ~ >$$
Clearly, $G_n^{ab}$ is isomorphic to $\Z \times \Z$. But as $\phi$ is onto, Image($\phi$) = $\Z \times \Z$.  
Hence, Image($\phi$) is isomorphic to $G_n^{ab}$.  Hence, $\phi$ defines the above short exact sequence.\\

We have the following lemma.

\begin{lemma}\label{gvbnlemma1}
 $G_n'$ is generated by the words $~ ~ ~ \alpha_{m, k, i}=\sigma_1^m \rho_1^k \sigma_i \rho_1^{-k} \sigma_1^{-1} \sigma_1^{-m} ~ ~ ~ $ and \\ $\beta_{m, k, i}=\sigma_1^m \rho_1^k \rho_i \rho_1^{-k} \rho_1^{-1} \sigma_1^{-m}, ~ ~ ~ $ 
where $m, k \in \Z$, $~ ~ ~ 1 \le i \le n-1$.
\end{lemma}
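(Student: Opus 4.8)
The plan is to apply Step 2 of the Reidemeister-Schreier algorithm directly, which requires first fixing an explicit Schreier set of coset representatives $\Lambda$ for $G_n'$ in $G_n$. Since $\phi$ identifies $G_n/G_n'$ with $\Z \times \Z$, generated by the images $\widetilde{\sigma_1}$ and $\widetilde{\rho_1}$, the natural choice is $\Lambda = \{\, \sigma_1^m \rho_1^k \mid m, k \in \Z \,\}$. First I would verify that this is indeed a Schreier set: every coset $G_n' x$ is determined by $\phi(x) = (m,k) \in \Z \times \Z$, and the unique representative in the coset is $\sigma_1^m \rho_1^k$, so condition (i) holds; for condition (ii), I would observe that any initial segment of the word $\sigma_1^m \rho_1^k$ (written as a product of the letters $\sigma_1^{\pm 1}, \rho_1^{\pm 1}$ in this fixed order) is again of the form $\sigma_1^{m'}\rho_1^{k'}$ or $\sigma_1^{m'}$, hence lies in $\Lambda$. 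This is a routine but necessary check.

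Next I would form the Schreier generators $S_{\lambda, a} = (\lambda a)(\overline{\lambda a})^{-1}$ for each $\lambda = \sigma_1^m \rho_1^k \in \Lambda$ and each $a \in S = \{\sigma_i, \rho_i : 1 \le i \le n-1\}$. For $a = \sigma_i$, we have $\phi(\sigma_1^m \rho_1^k \sigma_i) = (m+1, k)$, so $\overline{\lambda \sigma_i} = \sigma_1^{m+1}\rho_1^k$, giving the generator
$$
S_{\sigma_1^m\rho_1^k,\, \sigma_i} = \sigma_1^m \rho_1^k \sigma_i \left(\sigma_1^{m+1}\rho_1^k\right)^{-1} = \sigma_1^m \rho_1^k \sigma_i \rho_1^{-k} \sigma_1^{-(m+1)},
$$
which is precisely $\alpha_{m,k,i}$ after writing $\sigma_1^{-(m+1)} = \sigma_1^{-1}\sigma_1^{-m}$. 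Similarly, for $a = \rho_i$ we have $\phi(\sigma_1^m \rho_1^k \rho_i) = (m, k+1)$, so $\overline{\lambda \rho_i} = \sigma_1^m \rho_1^{k+1}$, yielding
$$
S_{\sigma_1^m\rho_1^k,\, \rho_i} = \sigma_1^m \rho_1^k \rho_i \left(\sigma_1^m \rho_1^{k+1}\right)^{-1} = \sigma_1^m \rho_1^k \rho_i \rho_1^{-(k+1)} \sigma_1^{-m},
$$
which is exactly $\beta_{m,k,i}$. By the Reidemeister-Schreier theorem, the full collection $\{S_{\lambda, a}\}$ generates $G_n'$, so the $\alpha_{m,k,i}$ and $\beta_{m,k,i}$ together generate $G_n'$ as claimed.

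The only genuine subtlety, and the step I would be most careful about, is confirming that these Schreier generators really do belong to $G_n'$ and that no generator has been omitted or double-counted. Each $S_{\lambda,a}$ lies in $G_n'$ because $\phi(S_{\lambda,a}) = \phi(\lambda a) - \phi(\overline{\lambda a}) = 0$ by construction, since $\lambda a$ and $\overline{\lambda a}$ lie in the same coset; I would note this explicitly. One should also check that none of these generators is trivially the identity word (which would instead contribute a relation in Step 3 rather than a generator) — here, since $\lambda a$ and $\overline{\lambda a}$ are never freely equal for $a = \sigma_i$ or $\rho_i$ (the letter $\sigma_i$ or $\rho_i$ with $i \neq 1$ cannot freely cancel against powers of $\sigma_1, \rho_1$, and even for $i=1$ the word is nontrivial), every pair $(\lambda, a)$ genuinely contributes a generator. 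This indexing bookkeeping — matching the two-variable index set $\Z \times \Z$ against the generating letters and recovering the stated closed forms for $\alpha_{m,k,i}$ and $\beta_{m,k,i}$ — is the main thing to get exactly right, and it is exactly the two-variable feature flagged in the introduction as distinguishing this computation from the one-variable case in \cite{bgn1}.
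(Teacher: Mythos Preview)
Your argument is correct and follows exactly the paper's approach: take the Schreier transversal $\Lambda = \{\sigma_1^m\rho_1^k : m,k\in\Z\}$, compute $\overline{\lambda a}$ via $\phi$, and read off the Schreier generators $S_{\lambda,a}$, which are precisely the $\alpha_{m,k,i}$ and $\beta_{m,k,i}$.

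One factual slip in your final paragraph, though it does not affect the proof: it is \emph{not} true that $\lambda a$ and $\overline{\lambda a}$ are never freely equal. For $a=\sigma_1$ with $k=0$ the word $\alpha_{m,0,1}=\sigma_1^m\sigma_1\,\sigma_1^{-(m+1)}$ reduces freely to $1$, and for $a=\rho_1$ the word $\beta_{m,k,1}=\sigma_1^m\rho_1^k\rho_1\,\rho_1^{-(k+1)}\sigma_1^{-m}$ reduces freely to $1$ for every $m,k$. This is harmless for the present lemma (a generating set may contain the identity), and these are exactly the pairs the paper records as Step~3 relations $\alpha_{m,0,1}=1$ and $\beta_{m,k,1}=1$ in the next lemma.
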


\begin{proof} Consider the Schreier set of coset representatives for $G_n'$ in $G_n$:
$$\Lambda=\{ \sigma_1^m \rho_1^k \ | \ m, k \in \Z \}.$$

Following the Reidemeister-Schreier algorithm, the group $G_n'$ is generated by the set:
$$\{S_{\lambda, a}=(\lambda a) (\overline{\lambda a})^{-1} \ | \ \lambda \in \Lambda, \ a \in \{\sigma_i, \rho_i | \ i=1, 2, \ldots, n-1\} \}.$$
Choose $\lambda=\sigma_1^m \rho_1^k$ from $\Lambda$.  
For  $a=\sigma_i$, $S_{\lambda, a} = \sigma_1^m \rho_1^k \sigma_i \rho_1^{-k} \sigma_1^{-1} \sigma_1^{-m}$. For $a=\rho_i$, $S_{\lambda, a} = \sigma_1^m \rho_1^k \rho_i \rho_1^{-k} \rho_1^{-1} \sigma_1^{-m}$. Hence, $G_n'$ is generated by the following elements:
\begin{equation*}\alpha_{m, k, i}=S_{\sigma_1^m \rho_1^k, \sigma_i}=\sigma_1^m \rho_1^k \sigma_i \rho_1^{-k} \sigma_1^{-1} \sigma_1^{-m},\end{equation*}
\begin{equation*}\beta_{m, k, i}=S_{\sigma_1^m \rho_1^k, \rho_i}=\sigma_1^m \rho_1^k \rho_i \rho_1^{-k} \rho_1^{-1} \sigma_1^{-m},\end{equation*}
where $m, k \in \Z$, $1 \le i \le n-1$.\end{proof}

 \section{Generators and Defining Relations for $GVB_n'$} \label{rs} 

In order to deduce a set of defining relations for $GVB_n'$, we will execute Step 3 and Step 4 of the Reidemeister-Schreier algorithm as described in \secref{rsa}. We have the following lemma.

\begin{lemma}\label{gvbnlemma2}
	$GVB_n'$ has the following presentation.
	
	Set of generators: $$ \{ ~ \alpha_{m,k,1},~ \alpha_{m,k,2},~ \beta_{m,k,2},~ \alpha_j,~ \beta_{m,j} ~ | ~ m, k \in \Z,~ 3 \le j \le n-1 ~ \} $$
	
	Set of defining relations:
	\medskip for all $~ m, k \in \Z$,
	\begin{equation}\label{gvbn0}
		\alpha_{m,0,1}=1;
	\end{equation}
	\begin{equation}\label{gvbn1} \alpha_{m,k,1} ~ \alpha_{j} ~ \alpha_{m+1,k,1}^{-1} ~ \alpha_{j}^{-1} ~ = 1, ~ j \ge 3 ; \end{equation}
	\begin{equation}\label{gvbn2} \alpha_{m,k,2} ~ \alpha_{j} ~ \alpha_{m+1,k,2}^{-1} ~ \alpha_{j}^{-1} ~ =1, ~ j \ge 4 ; \end{equation}
	\begin{equation}\label{gvbn3} \alpha_{i} ~ \alpha_{j} ~ \alpha_{i}^{-1} ~ \alpha_{j}^{-1} ~   =1, ~ i,j \ge 3, ~ |i-j| > 1 ; \end{equation}
	\begin{equation}\label{gvbn4} \beta_{m,k,2} ~ \beta_{m,j} ~ \beta_{m,k+1,2}^{-1} ~ \beta_{m,j}^{-1} ~  =1, ~ j \ge 4 ; \end{equation}
	\begin{equation}\label{gvbn5} \beta_{m,i} ~ \beta_{m,j} ~ \beta_{m,i}^{-1} ~ \beta_{m,j}^{-1} ~ =1, ~ i,j \ge 3, ~ |i-j| > 1 ; \end{equation}
	\begin{equation}\label{gvbn6} \alpha_{m,k,1} ~ \beta_{m+1,j} ~ \alpha_{m,k+1,1}^{-1} ~ \beta_{m,j}^{-1} ~ =1, ~ j \ge 3 ; \end{equation}
	\begin{equation}\label{gvbn7} \alpha_{m,k,2} ~ \beta_{m+1,j} ~ \alpha_{m,k+1,2}^{-1} ~ \beta_{m,j}^{-1} ~  =1, ~ j \ge 4 ; \end{equation}
	\begin{equation}\label{gvbn8} \alpha_{i} ~ \beta_{m+1,k,2} ~ \alpha_{i}^{-1} ~ \beta_{m,k,2}^{-1} ~ =1, ~ i \ge 4 ; \end{equation}
	\begin{equation}\label{gvbn9} \alpha_{i} ~ \beta_{m+1,j} ~ \alpha_{i}^{-1} ~ \beta_{m,j}^{-1} ~ =1, ~ i,j \ge 3, ~ |i-j|>1 ; \end{equation}
	\begin{equation}\label{gvbn10} \alpha_{m,k,1} ~ \alpha_{m+1,k,2} ~ \alpha_{m+2,k,1} ~ \alpha_{m+2,k,2}^{-1} ~ \alpha_{m+1,k,1}^{-1} ~ \alpha_{m,k,2}^{-1} ~ =1 ; \end{equation}
	\begin{equation}\label{gvbn11} \alpha_{m,k,2} ~ \alpha_{3} ~ \alpha_{m+2,k,2} ~ \alpha_{3}^{-1} ~ \alpha_{m+1,k,2}^{-1} ~ \alpha_{3}^{-1} ~ =1 ; \end{equation}
	\begin{equation}\label{gvbn12} \alpha_{i} ~ \alpha_{i+1} ~ \alpha_{i} ~ \alpha_{i+1}^{-1} ~ \alpha_{i}^{-1} ~ \alpha_{i+1}^{-1}  =1, ~ i \ge 3 ; \end{equation}
	\begin{equation}\label{gvbn13} \beta_{m,k+1,2} ~ \beta_{m,k+2,2}^{-1} ~ \beta_{m,k,2}^{-1} ~ =1 ; \end{equation}
	\begin{equation}\label{gvbn14} \beta_{m,k,2} ~ \beta_{m,3} ~ \beta_{m,k+2,2} ~ \beta_{m,3}^{-1} ~ \beta_{m,k+1,2}^{-1} ~ \beta_{m,3}^{-1} ~  =1 ; \end{equation}
	\begin{equation}\label{gvbn15} \beta_{m,i} ~ \beta_{m,i+1} ~ \beta_{m,i} ~ \beta_{m,i+1}^{-1} ~ \beta_{m,i}^{-1} ~ \beta_{m,i+1}^{-1} ~  =1, ~ i \ge 3 ; \end{equation}
	\begin{equation}\label{gvbn16} \alpha_{m,k+1,2} ~ \alpha_{m+1,k+1,1} ~ \beta_{m+2,k,2}^{-1} ~ \alpha_{m+1,k,1}^{-1} ~ \alpha_{m,k,2}^{-1} ~ =1 ; \end{equation}
	\begin{equation}\label{gvbn17} \beta_{m,k,2} ~ \alpha_{3} ~ \alpha_{m+1,k+1,2} ~ \beta_{m+2,3}^{-1} ~ \alpha_{m+1,k,2}^{-1} ~ \alpha_{3}^{-1} ~  =1 ; \end{equation}
	\begin{equation}\label{gvbn18} \beta_{m,i} ~ \alpha_{i+1} ~ \alpha_{i} ~ \beta_{m+2,i+1}^{-1} ~ \alpha_{i}^{-1} ~ \alpha_{i+1}^{-1} ~ =1, ~ i \ge 3 ; \end{equation}
	\begin{equation}\label{gvbn19} \beta_{m,k,2} ~ \alpha_{m,k+1,1} ~ \alpha_{m+1,k+1,2} ~ \alpha_{m+1,k,2}^{-1} ~ \alpha_{m,k,1}^{-1} ~ =1 ; \end{equation}
	\begin{equation}\label{gvbn20} \beta_{m,3} ~ \alpha_{m,k+1,2} ~ \alpha_{3} ~ \beta_{m+2,k,2}^{-1} ~ \alpha_{3}^{-1} ~ \alpha_{m,k,2}^{-1} ~ =1 ; \end{equation}
	\begin{equation}\label{gvbn21} \beta_{m,i+1} ~ \alpha_{i} ~ \alpha_{i+1} ~ \beta_{m+2,i}^{-1} ~ \alpha_{i+1}^{-1} ~ \alpha_{i}^{-1} ~ =1, ~ i \ge 3. \end{equation}

\end{lemma}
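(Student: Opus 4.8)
The plan is to carry out Steps 3 and 4 of the Reidemeister--Schreier algorithm of \secref{rsa}, starting from the generating set of \lemref{gvbnlemma1} and the Schreier transversal $\Lambda = \{\sigma_1^m \rho_1^k \mid m,k \in \Z\}$. First I would dispose of Step 3, which records those $S_{\lambda,a}$ for which $\lambda a$ is freely equal to $\overline{\lambda a}$. Taking $a = \rho_1$ gives $\sigma_1^m\rho_1^k \cdot \rho_1 = \sigma_1^m\rho_1^{k+1} = \overline{\lambda a}$ freely, so every $\beta_{m,k,1}$ equals $1$; this is precisely what removes the generators $\beta_{m,k,1}$ from the list. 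Taking $a = \sigma_1$ with $k=0$ gives $\sigma_1^m\sigma_1 = \sigma_1^{m+1}$ freely, yielding relation \eqref{gvbn0}, namely $\alpha_{m,0,1}=1$; note that for $k \neq 0$ the words $\sigma_1^m\rho_1^k\sigma_1$ and $\sigma_1^{m+1}\rho_1^k$ are not freely equal, so $\alpha_{m,k,1}$ genuinely survives.

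Before turning to the bulk of Step 4 I would record two reductions that justify the abbreviated generators $\alpha_j$ and $\beta_{m,j}$ for $j \ge 3$. Rewriting the commuting relator $\sigma_1\sigma_j\sigma_1^{-1}\sigma_j^{-1}$ (valid since $|1-j|>1$) at $\lambda = \sigma_1^m\rho_1^{0}$ and using \eqref{gvbn0} shows $\alpha_{m+1,0,j} = \alpha_{m,0,j}$, while rewriting the mixed relator $\sigma_j\rho_1\sigma_j^{-1}\rho_1^{-1}$ from \eqref{mm4} and using $\beta_{m,k,1}=1$ shows $\alpha_{m,k+1,j} = \alpha_{m,k,j}$; together these give that $\alpha_{m,k,j}$ is independent of both $m$ and $k$ for $j\ge 3$, and I rename it $\alpha_j$. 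Likewise, rewriting the commuting relator $\rho_1\rho_j\rho_1^{-1}\rho_j^{-1}$ and using $\beta_{m,k,1}=1$ gives $\beta_{m,k+1,j}=\beta_{m,k,j}$, so $\beta_{m,k,j}$ depends only on $m$ for $j\ge 3$, and I rename it $\beta_{m,j}$.

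The main body of the proof is Step 4: for every defining relator $r_\mu$ of $GVB_n$ and every $\lambda = \sigma_1^m\rho_1^k \in \Lambda$, compute the rewritten word $\tau(\lambda r_\mu \lambda^{-1})$. Since $r_\mu$ is trivial in $GVB_n$ and $\Lambda$ is Schreier, the prefix $\lambda$ and the tail $\lambda^{-1}$ contribute only freely trivial generators, so it suffices to rewrite $r_\mu$ reading coset representatives out of $\Lambda$ starting from $\lambda$, applying at each letter the branch of the definition of $K_{i_j}$ dictated by the sign $\epsilon_j$ and tracking how the $\phi$-image of each initial segment shifts $m$ and $k$. Organizing the relators by type, I expect the correspondence: the commuting $\sigma$-relators yield \eqref{gvbn1}--\eqref{gvbn3}; the commuting $\rho$-relators yield \eqref{gvbn4}--\eqref{gvbn5} (the case $\rho_1\rho_j$ being absorbed into the renaming above); the $\sigma\rho$-commuting relators \eqref{mm4} yield \eqref{gvbn6}--\eqref{gvbn9}; the $\sigma$-braid relators yield \eqref{gvbn10}--\eqref{gvbn12}; the $\rho$-braid relators yield \eqref{gvbn13}--\eqref{gvbn15}; and the mixed relators \eqref{mm2} and \eqref{mm3} yield \eqref{gvbn16}--\eqref{gvbn18} and \eqref{gvbn19}--\eqref{gvbn21} respectively. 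For instance, the braid relator $\sigma_1\sigma_2\sigma_1\sigma_2^{-1}\sigma_1^{-1}\sigma_2^{-1}$, read through the successive representatives $\sigma_1^{m+1}\rho_1^k,\sigma_1^{m+2}\rho_1^k,\sigma_1^{m+3}\rho_1^k,\ldots$, rewrites exactly to \eqref{gvbn10}, and the relator $\rho_1\sigma_2\sigma_1\rho_2^{-1}\sigma_1^{-1}\sigma_2^{-1}$ coming from \eqref{mm2} rewrites (after discarding $\beta_{m,k,1}=1$) to \eqref{gvbn16}.

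The chief obstacle is not conceptual but the scale and delicacy of the bookkeeping. As remarked in the introduction, the index set here is $\Z\times\Z$ rather than $\Z\times\Z_2$, so each rewriting is a genuinely two-parameter computation, and the many near-identical relations differ only by shifts in $m$ and $k$; the care required is to apply the correct branch of $K_{i_j}$ at every letter, to simplify consistently using $\beta_{m,k,1}=1$, relation \eqref{gvbn0}, and the renamings $\alpha_j,\beta_{m,j}$, and above all to verify that no relator family is omitted. Once the rewriting of each family is carried out, matching the output against the list \eqref{gvbn0}--\eqref{gvbn21} is mechanical, and the resulting generators and relators constitute the claimed presentation.
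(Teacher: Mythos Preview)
Your proposal is correct and follows essentially the same approach as the paper's own proof: both execute Step~3 of the Reidemeister--Schreier algorithm to obtain $\beta_{m,k,1}=1$ and $\alpha_{m,0,1}=1$, then derive the simplifications $\alpha_{m,k,j}=\alpha_j$ and $\beta_{m,k,j}=\beta_{m,j}$ for $j\ge 3$ from the rewritten commuting relators involving index~$1$, and finally carry out Step~4 relator-by-relator, splitting each family \eqref{eq3}--\eqref{eq9} into the cases $i=1$, $i=2$, and $i\ge 3$ to produce \eqref{gvbn1}--\eqref{gvbn21}. The only cosmetic difference is that you perform the renaming of $\alpha_j,\beta_{m,j}$ before listing all rewritten relators, whereas the paper first records the unreduced relations \eqref{eq1}--\eqref{eq9} and then applies the renaming; the content is identical.
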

\medskip

\begin{proof}
	We execute Step 3 of the Reidemeister-Schreier algorithm. We consider all the pairs $~ (\lambda, a) ~$ with $~ \lambda \in \Lambda=\{ \sigma_1^m \rho_1^k \ | \ m, k \in \Z \},~ a \in S= \{\sigma_i, \rho_i ~ | \ i=1, 2, \ldots, n-1\}$. We need to find the pairs $~ (\lambda, a) ~$ for which $\lambda a$ and $\overline{\lambda a}$ are freely equal.
	
	Note that, $\sigma_1^m \rho_1^k ~ \sigma_i$ and $\overline{\sigma_1^m \rho_1^k ~ \sigma_i}=\sigma_1^{m+1} \rho_1^k$ are freely equal if and only if $k=0$ and $i=1$. Also note that, $\sigma_1^m \rho_1^k ~ \rho_i$ and $\overline{\sigma_1^m \rho_1^k ~ \rho_i}=\sigma_1^m \rho_1^{k+1}$ are freely equal if and only if $i=1$. Hence, we get the following relations as some of the defining relations for $GVB_n'$.
	$$S_{\sigma_1^m, \sigma_1}=1, ~ \hbox{ i.e. } ~ \alpha_{m,0,1}=1;$$
	$$S_{\sigma_1^m \rho_1^k, \rho_1}=1, ~ \hbox{ i.e. } ~ \beta_{m,k,1}=1.$$
	
	\medskip Now, following Step 4 of the Reidemeister-Schreier algorithm, we calculate the terms $\tau(\lambda r_{\mu} \lambda^{-1})$ for each $\lambda \in \Lambda$ and for each of the defining relations $r_{\mu}=1$ of $GVB_n$, as follows:
		
	$$r_1 = \sigma_i \sigma_j \sigma_i^{-1} \sigma_j^{-1} = 1, \ |i-j|>1;$$
	$$r_2 = \rho_i \rho_j \rho_i^{-1} \rho_j^{-1} = 1, \ |i-j|>1;$$
	$$r_3 = \sigma_i \rho_j \sigma_i^{-1} \rho_j^{-1} = 1, \ |i-j|>1;$$
	$$r_4 = \sigma_i \sigma_{i+1} \sigma_i \sigma_{i+1}^{-1} \sigma_i^{-1} \sigma_{i+1}^{-1}=1;$$
	$$r_5 = \rho_i \rho_{i+1} \rho_i \rho_{i+1}^{-1} \rho_i^{-1} \rho_{i+1}^{-1} = 1;$$
	$$r_6 = \rho_i \sigma_{i+1} \sigma_i \rho_{i+1}^{-1} \sigma_i^{-1} \sigma_{i+1}^{-1} = 1;$$
	$$r_7 = \rho_{i+1} \sigma_i \sigma_{i+1} \rho_i^{-1} \sigma_{i+1}^{-1} \sigma_i^{-1} = 1.$$\\
	Choose any element $\lambda = \sigma_1^m \rho_1^k \in \Lambda.$ We deduce the following.
	
	$$\tau(\lambda r_1 \lambda^{-1}) = S_{\sigma_1^{m} \rho_1^{k}, \sigma_i} ~ S_{\sigma_1^{m+1} \rho_1^{k}, \sigma_j} ~ S_{\sigma_1^{m+1} \rho_1^{k}, \sigma_i}^{-1} ~ S_{\sigma_1^{m} \rho_1^{k}, \sigma_j}^{-1};$$
	
	$$\tau(\lambda r_2 \lambda^{-1}) = S_{\sigma_1^{m} \rho_1^{k}, \rho_i} ~ S_{\sigma_1^{m} \rho_1^{k+1}, \rho_j} ~ S_{\sigma_1^{m} \rho_1^{k+1}, \rho_i}^{-1} ~ S_{\sigma_1^{m} \rho_1^{k}, \rho_j}^{-1};$$
	
	$$\tau(\lambda r_3 \lambda^{-1}) = S_{\sigma_1^{m} \rho_1^{k}, \sigma_i} ~ S_{\sigma_1^{m+1} \rho_1^{k}, \rho_j} ~ S_{\sigma_1^{m} \rho_1^{k+1}, \sigma_i}^{-1} ~ S_{\sigma_1^{m} \rho_1^{k}, \rho_j}^{-1};$$
	
	$$\tau(\lambda r_4 \lambda^{-1}) = S_{\sigma_1^{m} \rho_1^{k}, \sigma_i} ~ S_{\sigma_1^{m+1} \rho_1^{k}, \sigma_{i+1}} ~ S_{\sigma_1^{m+2} \rho_1^{k}, \sigma_i} ~ S_{\sigma_1^{m+2} \rho_1^{k}, \sigma_{i+1}}^{-1} ~ S_{\sigma_1^{m+1} \rho_1^{k}, \sigma_i}^{-1} ~ S_{\sigma_1^{m} \rho_1^{k}, \sigma_{i+1}}^{-1};$$
	
	$$\tau(\lambda r_5 \lambda^{-1}) = S_{\sigma_1^{m} \rho_1^{k}, \rho_i} ~ S_{\sigma_1^{m} \rho_1^{k+1}, \rho_{i+1}} ~ S_{\sigma_1^{m} \rho_1^{k+2}, \rho_i} ~ S_{\sigma_1^{m} \rho_1^{k+2}, \rho_{i+1}}^{-1} ~ S_{\sigma_1^{m} \rho_1^{k+1}, \rho_i}^{-1} ~ S_{\sigma_1^{m} \rho_1^{k}, \rho_{i+1}}^{-1};$$
	
	$$\tau(\lambda r_6 \lambda^{-1}) = S_{\sigma_1^{m} \rho_1^{k}, \rho_i} ~ S_{\sigma_1^{m} \rho_1^{k+1}, \sigma_{i+1}} ~ S_{\sigma_1^{m+1} \rho_1^{k+1}, \sigma_i} ~ S_{\sigma_1^{m+2} \rho_1^{k}, \rho_{i+1}}^{-1} ~ S_{\sigma_1^{m+1} \rho_1^{k}, \sigma_i}^{-1} ~ S_{\sigma_1^{m} \rho_1^{k}, \sigma_{i+1}}^{-1};$$
	
	$$\tau(\lambda r_7 \lambda^{-1}) = S_{\sigma_1^{m} \rho_1^{k}, \rho_{i+1}} ~ S_{\sigma_1^{m} \rho_1^{k+1}, \sigma_i} ~ S_{\sigma_1^{m+1} \rho_1^{k+1}, \sigma_{i+1}} ~ S_{\sigma_1^{m+2} \rho_1^{k}, \rho_i}^{-1} ~ S_{\sigma_1^{m+1} \rho_1^{k}, \sigma_{i+1}}^{-1} ~ S_{\sigma_1^{m} \rho_1^{k}, \sigma_i}^{-1}.$$
	
	Hence we get the following set of defining relations for $GVB_n'$.
	\begin{equation}\label{eq1}
		\alpha_{m,0,1}=1;
	\end{equation}
	\begin{equation}\label{eq2}
		\beta_{m,k,1}=1;
	\end{equation}
	\begin{equation}\label{eq3}
		\alpha_{m,k,i} ~ \alpha_{m+1,k,j} ~ \alpha_{m+1,k,i}^{-1} ~ \alpha_{m,k,j}^{-1} = 1,~ |i-j|>1;
	\end{equation}
	\begin{equation}\label{eq4}
		\beta_{m,k,i} ~ \beta_{m,k+1,j} ~ \beta_{m,k+1,i}^{-1} ~ \beta_{m,k,j}^{-1} = 1,~ |i-j|>1;
	\end{equation}
	\begin{equation}\label{eq5}
		\alpha_{m,k,i} ~ \beta_{m+1,k,j} ~ \alpha_{m,k+1,i}^{-1} ~ \beta_{m,k,j}^{-1} = 1,~ |i-j|>1;
	\end{equation}
	\begin{equation}\label{eq6}
		\alpha_{m,k,i} ~ \alpha_{m+1,k,i+1} ~ \alpha_{m+2,k,i} ~ \alpha_{m+2,k,i+1}^{-1} ~ \alpha_{m+1,k,i}^{-1} ~ \alpha_{m,k,i+1}^{-1} = 1;
	\end{equation}
	\begin{equation}\label{eq7}
		\beta_{m,k,i} ~ \beta_{m,k+1,i+1} ~ \beta_{m,k+2,i} ~ \beta_{m,k+2,i+1}^{-1} ~ \beta_{m,k+1,i}^{-1} ~ \beta_{m,k,i+1}^{-1} = 1;
	\end{equation}
	\begin{equation}\label{eq8}
		\beta_{m,k,i} ~ \alpha_{m,k+1,i+1} ~ \alpha_{m+1,k+1,i} ~ \beta_{m+2,k,i+1}^{-1} ~ \alpha_{m+1,k,i}^{-1} ~ \alpha_{m,k,i+1}^{-1} = 1;
	\end{equation}
	\begin{equation}\label{eq9}
		\beta_{m,k,i+1} ~ \alpha_{m,k+1,i} ~ \alpha_{m+1,k+1,i+1} ~ \beta_{m+2,k,i}^{-1} ~ \alpha_{m+1,k,i+1}^{-1} ~ \alpha_{m,k,i}^{-1} = 1.
	\end{equation}
	
	\medskip For $i=1, ~ j \ge 3$, \eqref{eq4} gives the relations:
	$$\beta_{m,k,1} ~ \beta_{m,k+1,j} ~ \beta_{m,k+1,1}^{-1} ~ \beta_{m,k,j}^{-1} = 1, ~ j \ge 3, ~ m, k \in \Z.$$
	
	Using \eqref{eq2} and the above relations we get:
	\begin{equation*}
	\beta_{m,k+1,j} = \beta_{m,k,j}, ~ j \ge 3, ~ m,k \in \Z.
	\end{equation*}
	
	Iterating the above relations we have:
	\begin{equation}\label{eq10}
		\beta_{m,k,j} = \beta_{m,0,j},~ j \ge 3, ~ m,k \in \Z.
	\end{equation}
	
	\medskip For $j=1, ~ i \ge 3$, \eqref{eq5} gives the relations:
	$$\alpha_{m,k,i} ~ \beta_{m+1,k,1} ~ \alpha_{m,k+1,i}^{-1} ~ \beta_{m,k,1}^{-1} = 1, ~ i \ge 3, ~ m, k \in \Z.$$
	
	Using \eqref{eq2} and the above relations we get:
	\begin{equation*}
	\alpha_{m,k+1,i} = \alpha_{m,k,i}, ~ i \ge 3, ~ m,k \in \Z.
	\end{equation*}
	
	Iterating the above relations we have:
	\begin{equation}\label{eq11}
	\alpha_{m,k,i} = \alpha_{m,0,i},~ i \ge 3, ~ m,k \in \Z.
	\end{equation}
	
	For $k=0, ~ i=1$, \eqref{eq3} gives the relations:
	$$\alpha_{m,0,1} ~ \alpha_{m+1,0,j} ~ \alpha_{m+1,0,1}^{-1} ~ \alpha_{m,0,j}^{-1} = 1,~ j \ge 3, ~ m \in \Z.$$
	
	Using \eqref{eq1} and the above relations we have:
	$$\alpha_{m+1,0,j} = \alpha_{m,0,j}, ~ j \ge 3, ~ m \in \Z.$$
	
	Iterating the above relations we get:
	\begin{equation}\label{eq12}
		\alpha_{m,0,j} = \alpha_{0,0,j},~ j \ge 3,~ m \in \Z.
	\end{equation}
	
	\eqref{eq11} and \eqref{eq12} together gives:
	\begin{equation}\label{eq13}
		\alpha_{m,k,j} = \alpha_{0,0,j},~ j \ge 3,~ m,k \in \Z.
	\end{equation}
	
	Using \eqref{eq13} we replace $\alpha_{m,k,j}$ by $\alpha_{0,0,j}$ for all $(m, k) \ne (0,0)$ and $j \ge 3$, in all the defining relations, and remove all $\alpha_{m,k,j}$ with $(m, k) \ne (0,0),~ j \ge 3$, from the set of generators. After this replacement we denote $\alpha_{0,0,j}$ simply by $\alpha_j$ for $j \ge 3$.\\
	
	Using \eqref{eq10} we replace $\beta_{m,k,j}$ by $\beta_{m,0,j}$ for all $k \ne 0,~ j \ge 3,~ m \in \Z,$ in all the defining relations, and remove all $\beta_{m,k,j}$ with $k \ne 0, ~ j \ge 3, ~ m \in \Z,$ from the set of generators. After this replacement we denote $\beta_{m,0,j}$ simply by $\beta_{m,j}$ for $j \ge 3, ~ m \in \Z.$\\
	
	Lastly, using \eqref{eq2} we replace $\beta_{m,k,1}$ by 1, for all $m,k \in \Z$, in all the defining relations, and remove all $\beta_{m,k,1}$ from the set of generators.
	
	After incorporating all the replacements and notation changes as described above we get the following set of generators for $GVB_n'$:
	$$\{ ~ \alpha_{m,k,1},~ \alpha_{m,k,2},~ \beta_{m,k,2},~ \alpha_j,~ \beta_{m,j} ~ | ~ m, k \in \Z,~ 3 \le j \le n-1 ~ \},$$
	
	and the following list of defining relations for $GVB_n'$.\\
	
	To start with, we have the relations \eqref{gvbn0}. (same as \eqref{eq1})\\
		
	For \eqref{eq3} we have the following 3 possible cases:\\
	Case 1: $i=1, ~ j \ge 3$; gives the relations: \eqref{gvbn1}.\\
	Case 2: $i=2, ~ j \ge 4$; gives the relations: \eqref{gvbn2}.\\
	Case 3: $i,j \ge 3, ~ |i-j|>1$; gives the relations: \eqref{gvbn3}.\\

	For \eqref{eq4} we have the following 3 possible cases:\\
	Case 1: $i=1, ~ j \ge 3$; gives no nontrivial relation.\\
	Case 2: $i=2, ~ j \ge 4$; gives the relations: \eqnref{gvbn4}.\\
	Case 3: $i,j \ge 3, ~ |i-j|>1$; gives the relations: \eqnref{gvbn5}.\\

	For \eqref{eq5} we have the following 5 possible cases:\\
	Case 1: $i=1, ~ j \ge 3$; gives the relations: \eqnref{gvbn6}.\\
	Case 2: $j=1, ~ i \ge 3$; gives no nontrivial relation.\\
	Case 3: $i=2, ~ j \ge 4$; gives the relations: \eqnref{gvbn7}.\\
	Case 4: $j=2, ~ i \ge 4$; gives the relations: \eqnref{gvbn8}.\\
	Case 5: $i,j \ge 3, ~ |i-j|>1$; gives the relations: \eqnref{gvbn9}.\\

	For \eqref{eq6} we have the following 3 possible cases:\\
	Case 1: $i=1$; gives the relations: \eqnref{gvbn10}.\\
	Case 2: $i=2$; gives the relations: \eqnref{gvbn11}.\\
	Case 3: $i \ge 3$; gives the relations: \eqnref{gvbn12}.\\

	For \eqref{eq7} we have the following 3 possible cases:\\
	Case 1: $i=1$; gives the relations: \eqnref{gvbn13}.\\
	Case 2: $i=2$; gives the relations: \eqnref{gvbn14}.\\
	Case 3: $i \ge 3$; gives the relations: \eqnref{gvbn15}.\\

	For \eqref{eq8} we have the following 3 possible cases:\\
	Case 1: $i=1$; gives the relations: \eqnref{gvbn16}.\\
	Case 2: $i=2$; gives the relations: \eqnref{gvbn17}.\\
	Case 3: $i \ge 3$; gives the relations: \eqnref{gvbn18}.\\

	For \eqref{eq9} we have the following 3 possible cases:\\
	Case 1: $i=1$; gives the relations: \eqnref{gvbn19}.\\
	Case 2: $i=2$; gives the relations: \eqnref{gvbn20}.\\
	Case 3: $i \ge 3$; gives the relations: \eqnref{gvbn21}.\\
	
	This completes the proof of the lemma.
\end{proof}

\section{Proof of \thmref{gvbnmainth}}\label{thm1} 

\subsection{Infinite generation of $GVB_3'$}

From \lemref{gvbnlemma2} we have the following presentation for $GVB_3'$:\\

Generators:  $~ \{ ~ \alpha_{m,k,1}, ~ \alpha_{m,k,2}, ~ \beta_{m,k,2} ~ | ~ m, k \in \Z ~ \}$.\\

Defining Relations: for all $m, k \in \Z,$
\begin{equation}\label{gvbn22} \alpha_{m,k,1} ~ \alpha_{m+1,k,2} ~ \alpha_{m+2,k,1} ~ \alpha_{m+2,k,2}^{-1} ~ \alpha_{m+1,k,1}^{-1} ~ \alpha_{m,k,2}^{-1} ~ =1 ; \end{equation}
\begin{equation}\label{gvbn23} \beta_{m,k+1,2} ~ \beta_{m,k+2,2}^{-1} ~ \beta_{m,k,2}^{-1} ~ =1 ; \end{equation}
\begin{equation}\label{gvbn24} \alpha_{m,k+1,2} ~ \alpha_{m+1,k+1,1} ~ \beta_{m+2,k,2}^{-1} ~ \alpha_{m+1,k,1}^{-1} ~ \alpha_{m,k,2}^{-1} ~ =1 ; \end{equation}
\begin{equation}\label{gvbn25} \beta_{m,k,2} ~ \alpha_{m,k+1,1} ~ \alpha_{m+1,k+1,2} ~ \alpha_{m+1,k,2}^{-1} ~ \alpha_{m,k,1}^{-1} ~ =1 ; \end{equation}
\begin{equation}\label{gvbn26}
	\alpha_{m,0,1} = 1.
\end{equation}

We have the following lemma.

\begin{lemma}\label{gvbnlemma3}
	$GVB_3'$ is not finitely generated.
\end{lemma}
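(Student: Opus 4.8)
The plan is to show that the abelianization of $GVB_3'$ is infinitely generated, which immediately forces $GVB_3'$ itself to be infinitely generated. To this end I would abelianize the presentation from the displayed relations \eqref{gvbn22}--\eqref{gvbn26}, replacing each generator by a symbol in a free abelian group and rewriting the relations additively. Write the images of $\alpha_{m,k,1}$, $\alpha_{m,k,2}$, $\beta_{m,k,2}$ additively as $a_{m,k}$, $b_{m,k}$, $c_{m,k}$ respectively. The relation \eqref{gvbn26} kills all $a_{m,0}$, and the remaining four families become linear relations among the $a_{m,k}$, $b_{m,k}$, $c_{m,k}$.

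My main strategy is to identify a $\Z$-linear quotient of this abelianization onto an infinite-rank free abelian group, or directly to exhibit infinitely many generators that survive and are independent. Concretely, I would carry out the additive rewriting of each relation: \eqref{gvbn22} gives $a_{m,k} + b_{m+1,k} + a_{m+2,k} - b_{m+2,k} - a_{m+1,k} - b_{m,k} = 0$; \eqref{gvbn23} gives $c_{m,k+1} - c_{m,k+2} - c_{m,k} = 0$, a Fibonacci-type recurrence in $k$; \eqref{gvbn24} and \eqref{gvbn25} each give a five-term relation mixing $a$, $b$, $c$. Using these I would try to eliminate the $b$ and $c$ generators in terms of the $a$ generators (or vice versa), reducing everything to relations purely among the $a_{m,k}$. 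The goal is to show that, after elimination, an infinite family such as $\{a_{m,k} : k \ge 1\}$ (for a fixed suitable $m$, or indexed by a single parameter) remains free, i.e. subject to no relations that would collapse it to a finitely generated group.

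The key computational heart will be the recurrence \eqref{gvbn23} in the $c$-variables together with the coupling relations \eqref{gvbn24}, \eqref{gvbn25}: the three-term recurrence means the $c_{m,k}$ are determined by two seeds per $m$ and grow without collapsing, and I expect the coupled relations to propagate this non-collapse into the $a$ and $b$ variables across the $m$-index as well. I would track the rank carefully by treating the abelianization as the cokernel of an infinite integer matrix and arguing that infinitely many of the $a$ (or $b$) generators map to $\Z$-independent elements of the quotient. An efficient way to make this rigorous is to construct an explicit surjective homomorphism from $GVB_3'^{ab}$ onto $\bigoplus_{i \in \Z} \Z$ (or onto a group like $\Z^{(\infty)}$) by assigning values to the generators consistent with all the relations, and checking independence of the images of an infinite subfamily.

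The hard part will be managing the two-variable index $(m,k)$ during elimination: unlike a one-parameter situation, the relations couple shifts in both $m$ and $k$ simultaneously, so I must be careful that eliminating one family does not inadvertently introduce relations that finitely generate the survivors. In particular I anticipate the delicate point is verifying that the five-term relations \eqref{gvbn24} and \eqref{gvbn25}, once the $c$'s are substituted via the recurrence \eqref{gvbn23}, do not impose enough constraints to reduce the $a$-family to finite rank. I would resolve this by choosing the target abelian group and the assignment of generator-values cleverly so that all relations are automatically satisfied while infinitely many generators receive independent nonzero values, thereby exhibiting the required infinite-rank quotient and concluding that $GVB_3'$ is not finitely generated.
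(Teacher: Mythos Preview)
Your overall strategy---showing that the abelianization $(GVB_3')^{ab}$ is not finitely generated---is sound and would succeed; one can indeed exhibit a surjection from $(GVB_3')^{ab}$ onto a free abelian group of infinite rank. However, your proposal defers precisely the crux: you correctly flag that the hard part is the two-variable elimination and propose to resolve it by a ``clever choice'' of target and assignment, but you never say what that choice is. Without an explicit homomorphism or an explicit independence verification, this is a plan rather than a proof. One minor correction to your intuition: the recurrence $c_{m,k+2}=c_{m,k+1}-c_{m,k}$ coming from \eqref{gvbn23} is periodic of period~$6$ in $k$, so the $c$-variables contribute at most two free parameters per value of $m$ and do not themselves carry infinite rank; the infinite rank has to come from the $a$-variables in the $k$-direction.

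The paper takes a genuinely different route and never abelianizes. After eliminating $\beta_{m,k,2}$ via \eqref{gvbn25}, it passes to two successive \emph{non-abelian} quotients of $GVB_3'$: first by the normal closure of the words $w_{m,k}=\alpha_{m,k,1}\,\alpha_{m+1,k,2}$, then by the normal closure of $v_{m,k}=\alpha_{m+1,k,1}^{-1}\,\alpha_{m-1,k,1}$. These two tailor-made quotients collapse the defining relations so completely that the result is the \emph{free group} on $\{\alpha_{0,k,1}:k\in\Z\setminus\{0\}\}$, giving a surjection $GVB_3'\twoheadrightarrow F^{\infty}$. The paper's approach thus yields a stronger conclusion (a non-abelian free quotient) with essentially no linear-algebraic bookkeeping, whereas your abelian route would require analyzing the cokernel of an infinite integer matrix indexed by $\Z\times\Z$. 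Both approaches are valid; the paper's is shorter because the two normal subgroups are chosen specifically to annihilate the relations, and abelianizing the paper's map $GVB_3'\to F^{\infty}$ in fact hands you the surjection onto $\bigoplus_{\Z}\Z$ that your approach seeks.
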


\begin{proof}
	From \eqnref{gvbn25} we have:
	$$\beta_{m,k,2} ~ = ~ \alpha_{m,k,1} ~ \alpha_{m+1,k,2} ~ \alpha_{m+1,k+1,2}^{-1} ~ \alpha_{m,k+1,1}^{-1}.$$
	
	Replacing these values of $\beta_{m,k,2}$ in the other relations and removing the generators $\beta_{m,k,2}$ from the set of generators we obtain an equivalent presentation for $GVB_3'$ as follows:\\
	
	Generators: $~ \{ ~ \alpha_{m,k,1}, ~ \alpha_{m,k,2} ~ | ~ m, k \in \Z ~ \}; $\\
	
	Defining relations: For all $m, k \in \Z,$
	\begin{equation}\label{gvbn27} \alpha_{m,k,1} ~ \alpha_{m+1,k,2} ~ \alpha_{m+2,k,1} ~ \alpha_{m+2,k,2}^{-1} ~ \alpha_{m+1,k,1}^{-1} ~ \alpha_{m,k,2}^{-1} ~ =1 ; \end{equation}
	\begin{equation}\label{gvbn28} \alpha_{m,k+1,1} ~ \alpha_{m+1,k+1,2} ~ \alpha_{m+1,k+2,2}^{-1} ~ \alpha_{m,k+2,1}^{-1} ~ = \end{equation}
	\begin{equation*}\label{gvbn28'}
		~ \alpha_{m,k,1} ~ \alpha_{m+1,k,2} ~ \alpha_{m+1,k+1,2}^{-1} ~ \alpha_{m,k+1,1}^{-1} ~ \alpha_{m,k+2,1} ~ \alpha_{m+1,k+2,2} ~ \alpha_{m+1,k+3,2}^{-1} ~ \alpha_{m,k+3,1}^{-1} ; \end{equation*}
	\begin{equation}\label{gvbn29}
		\alpha_{m,k+1,2} ~ \alpha_{m+1,k+1,1} ~ \alpha_{m+2,k+1,1} ~ \alpha_{m+3,k+1,2} ~ \alpha_{m+3,k,2}^{-1} ~ \alpha_{m+2,k,1}^{-1} ~ \alpha_{m+1,k,1}^{-1} ~ \alpha_{m,k,2}^{-1} = 1;
	\end{equation}
	\begin{equation}\label{gvbn30}
		\alpha_{m,0,1}=1.
	\end{equation}
	
	Now, consider the words $w_{m,k} = \alpha_{m, k, 1} ~ \alpha_{m+1, k, 2}$ in $GVB_3'$ for all $m,k \in \Z$. Now let's construct the quotient group $ ~ GVB_3' / W  , ~ $ where $W = \overline{ \langle w_{m,k} ~ | ~ m, k \in \Z \rangle } $, the normal subgroup generated by the words $w_{m,k}$. We obtain a presentation for $ ~ GVB_3' / W  ~ $ by inserting the relations $\alpha_{m, k, 1} ~ \alpha_{m+1, k, 2} = 1, ~ m, k \in \Z$ in the last presentation for $GVB_3'$; and the obtained presentation for $ ~ GVB_3' / W  ~ $ is as follows:\\
	
	Generators: $~ \{ ~ \alpha_{m,k,1} ~ | ~ m, k \in \Z ~ \}; $\\
	
	Defining relations:
	\begin{equation}\label{gvbn31}
		\alpha_{m+2,k,1} = \alpha_{m-1,k,1}^{-1};
	\end{equation}
	\begin{equation}\label{gvbn32}
		\alpha_{m-1,k+1,1}^{-1} ~ \alpha_{m+1,k+1,1} ~ \alpha_{m+1,k,1}^{-1} ~ \alpha_{m-1,k,1} = 1;
	\end{equation}
	\begin{equation}\label{gvbn33}
		\alpha_{m,0,1} = 1.
	\end{equation}
	
	Now, we consider the words $v_{m,k} = \alpha_{m+1,k,1}^{-1} ~ \alpha_{m-1,k,1}$ in $GVB_3' / W$ for all $m, k \in \Z.$ And we consider the quotient group $ ~ ( GVB_3' / W ) / V,$ where $V = \overline{ \langle v_{m,k} ~ | ~ m, k \in \Z \rangle }$, the normal subgroup generated by the words $v_{m,k}.$ Similar to what we did earlier, we obtain a presentation for $( GVB_3' / W ) / V$ by inserting the relations $\alpha_{m+1,k,1} ~ = ~ \alpha_{m-1,k,1},$ $ ~ m, k \in \Z$ in the above presentation for $GVB_3' / W$; and the obtained presentation for $( GVB_3' / W ) / V$ is as follows:\\
	
	Generators: $ \{ ~ \alpha_{0,k,1}, ~ \alpha_{1,k,1} ~ | ~ k \in \Z ~ \};  $ \\
	
	Defining relations:
	\begin{equation}\label{gvbn34}
		\alpha_{0,0,1} = \alpha_{1,0,1} = 1;
	\end{equation}
	\begin{equation}\label{gvbn35}
		\alpha_{1,k,1} = \alpha_{0,k,1}^{-1}.
	\end{equation}
	
	Clearly, from \eqnref{gvbn35} we can remove the generators $\alpha_{1,k,1}$ and a free presentation for the group $(GVB_3' / W) / V$ as follows:
	\begin{equation*}
		(GVB_3' / W) / V = \langle ~ \alpha_{0,k,1}, ~ k \in \Z- \{ 0 \} ~ \rangle .
	\end{equation*}
	
	Hence, we have obtained the following quotient maps:
	\begin{equation*}GVB_3' ~ \xrightarrow{\phi} ~ GVB_3' / W ~ \xrightarrow{\psi} ~ ( GVB_3' / W ) / V ~ \cong ~ \langle ~ \alpha_{0,k,1}, ~ k \in \Z- \{ 0 \} ~ \rangle ~ \cong ~ F^{\infty}, \end{equation*}
	which gives us an onto homomorphism $~ \psi \circ \phi ~$ from the group $GVB_3'$ to the free group of infinite rank $F^{\infty}$. This proves that $GVB_3'$ is not finitely generated.
\end{proof}

\subsection{Finite generation of $GVB_4'$}

\begin{lemma}\label{gvbnlemma4}
	$GVB_4'$ is finitely generated.
\end{lemma}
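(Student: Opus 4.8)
The plan is to produce a \emph{finite} presentation for $GVB_4'$ by starting from the presentation in \lemref{gvbnlemma2} specialized to $n=4$ and then systematically eliminating all but finitely many of the generators $\alpha_{m,k,1}$, $\alpha_{m,k,2}$, $\beta_{m,k,2}$, $\alpha_3$, $\beta_{m,3}$. The general strategy mirrors what was done for $GVB_3'$ in \lemref{gvbnlemma3}, but now the outcome should be the opposite: rather than mapping onto a free group of infinite rank, the reduction should terminate with a finite generating set. For $n=4$ the indices $i,j \in \{1,2,3\}$, so the relations \eqref{gvbn1}--\eqref{gvbn21} simplify considerably: several of them involving $j \ge 4$ become vacuous, and the surviving relations relate $\beta_{m,3}$ and $\alpha_3$ to the $\alpha_{m,k,1}, \alpha_{m,k,2}, \beta_{m,k,2}$ family.

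First I would write down exactly which of the relations \eqref{gvbn0}--\eqref{gvbn21} survive when $n=4$, and use the ``commuting-type'' relations \eqref{gvbn6}, \eqref{gvbn1}, and the braid-type relations \eqref{gvbn18}, \eqref{gvbn21}, \eqref{gvbn17}, \eqref{gvbn20} (now specialized with $i=3$ and with the single extra generator $\alpha_3$, $\beta_{m,3}$) to derive recurrences that let me express generators with large index $(m,k)$ in terms of those with small index. The key observation I expect to exploit is that relations of the form \eqref{gvbn6}, namely $\alpha_{m,k,1}\,\beta_{m+1,j}\,\alpha_{m,k+1,1}^{-1}\,\beta_{m,j}^{-1}=1$ with $j=3$, together with \eqref{gvbn1} give recurrences in the $m$ and $k$ directions for the $\beta_{m,3}$ and allow solving for $\alpha_3$-conjugation. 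The aim is to show that once finitely many of the $\alpha_{m,k,1}$, $\alpha_{m,k,2}$, $\beta_{m,k,2}$ (for small $|m|,|k|$) and the generators $\alpha_3$, $\beta_{0,3}$ are fixed, every other generator is forced to equal a word in these finitely many, via the braid relations \eqref{gvbn10}, \eqref{gvbn11} and the mixed relations \eqref{gvbn16}, \eqref{gvbn19}.

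The heart of the argument, and the step I expect to be the main obstacle, is establishing that the two-variable recurrences genuinely close up into a finite set rather than propagating new independent generators indefinitely (which is precisely what failed for $n=3$). Concretely, for $n=3$ the absence of the extra strand meant the relations \eqref{gvbn27}--\eqref{gvbn30} degenerated and one could kill generators down to a free group on infinitely many $\alpha_{0,k,1}$; the presence of $\alpha_3$ and $\beta_{m,3}$ for $n=4$ supplies the additional relations \eqref{gvbn11}, \eqref{gvbn14}, \eqref{gvbn17}, \eqref{gvbn18}, \eqref{gvbn20}, \eqref{gvbn21} which I expect to pin down the $k$-direction (and then the $m$-direction) recurrences to eventual periodicity or constancy. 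The careful bookkeeping of which relation expresses which generator, so that eliminations are consistent and no circularity arises, is the delicate part.

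Practically, I would carry this out by: (1) listing the $n=4$ relations explicitly; (2) using \eqref{gvbn1} and \eqref{gvbn6} to reduce the dependence of $\alpha_{m,k,1}$ and $\beta_{m,3}$ on one of the indices; (3) using \eqref{gvbn2} and \eqref{gvbn7}, together with \eqref{gvbn11}, to do the same for $\alpha_{m,k,2}$; (4) using \eqref{gvbn13} and \eqref{gvbn14} to reduce $\beta_{m,k,2}$ in the $k$-direction to boundedly many values per $m$, and then the mixed relations to bound the $m$-dependence; and (5) collecting the finitely many surviving generators. Once a finite generating set is exhibited the lemma follows, since finite generation is exactly the assertion; I would not need to track the defining relations carefully for this lemma, only the generators, deferring the sharper rank bound of $3n-7$ to the subsequent analysis.
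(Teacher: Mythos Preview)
Your overall strategy---start from the $n=4$ specialization of the presentation in \lemref{gvbnlemma2} and perform Tietze eliminations until only finitely many generators remain---is exactly the paper's approach. However, your concrete elimination plan relies on relations that are \emph{vacuous} when $n=4$, and this is a genuine gap.

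Specifically: relations \eqref{gvbn2} and \eqref{gvbn7} require $j\ge 4$, and relations \eqref{gvbn18} and \eqref{gvbn21} require $i\ge 3$ (hence $i+1\ge 4$); for $n=4$ the only surviving index is $j=3$, so all four of these families are empty. You cite them as the engine of step~(3) (reducing $\alpha_{m,k,2}$) and as part of the ``braid-type'' toolkit, but they contribute nothing. With those removed, your plan does not explain how the $k$-dependence of $\alpha_{m,k,2}$ is controlled, nor how the $m$-dependence of $\beta_{m,k,2}$ is bounded (relation \eqref{gvbn8} also needs $i\ge4$ and is vacuous).

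The paper's route is more indirect and worth noting. It first uses \eqref{gvbn6} with $k=0$ to write every $\beta_{m,3}$ in terms of $\beta_{0,3}$ and the $\alpha_{m,1,1}$. It then uses \eqref{gvbn20} to \emph{eliminate} all $\beta_{m,k,2}$ outright, expressing them via $\alpha$'s, $\alpha_3$, and $\beta_{0,3}$. Only after this elimination does \eqref{gvbn13}---which originally involved only $\beta$'s---become, upon substitution, a recurrence in $k$ for the $\alpha_{m,k,2}$, reducing them to $k\in\{0,1,2\}$; then \eqref{gvbn11} handles the $m$-direction. Finally \eqref{gvbn1} gives $\alpha_{m,k,1}=\alpha_3^{-m}\alpha_{0,k,1}\alpha_3^{m}$, and \eqref{gvbn16} kills the remaining $k$-dependence of $\alpha_{0,k,1}$. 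The resulting finite generating set has nine elements. The subtlety you anticipated---that the two-variable recurrence might not close up---is resolved precisely by this indirect substitution through \eqref{gvbn20} and \eqref{gvbn13}, not by the direct relations you listed.
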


\begin{proof}
	To prove this lemma, we will apply different Tietze transformations to the presentation for $GVB_4'$ deduced in \lemref{gvbnlemma2}.\\
	From \lemref{gvbnlemma2} we have the following presentation for $GVB_4'$:\\
	
	Generators: $ \{ ~ \alpha_{m,k,1}, ~ \alpha_{m,k,2}, ~ \alpha_3, ~ \beta_{m,k,2}, ~ \beta_{m,3} ~ | ~ m, k \in \Z ~ \} $.\\
	
	Defining relations: for all $m, k \in \Z,$
	\begin{equation}\label{gvbn36} \alpha_{m,k,1} ~ \alpha_{3} ~ \alpha_{m+1,k,1}^{-1} ~ \alpha_{3}^{-1} ~ = 1 ; \end{equation}
	\begin{equation}\label{gvbn37} \alpha_{m,k,1} ~ \beta_{m+1,3} ~ \alpha_{m,k+1,1}^{-1} ~ \beta_{m,3}^{-1} ~ =1 ; \end{equation}
	\begin{equation}\label{gvbn38} \alpha_{m,k,1} ~ \alpha_{m+1,k,2} ~ \alpha_{m+2,k,1} ~ \alpha_{m+2,k,2}^{-1} ~ \alpha_{m+1,k,1}^{-1} ~ \alpha_{m,k,2}^{-1} ~ =1 ; \end{equation}
	\begin{equation}\label{gvbn39} \alpha_{m,k,2} ~ \alpha_{3} ~ \alpha_{m+2,k,2} ~ \alpha_{3}^{-1} ~ \alpha_{m+1,k,2}^{-1} ~ \alpha_{3}^{-1} ~ =1 ; \end{equation}
	\begin{equation}\label{gvbn40} \beta_{m,k+1,2} ~ \beta_{m,k+2,2}^{-1} ~ \beta_{m,k,2}^{-1} ~ =1 ; \end{equation}
	\begin{equation}\label{gvbn41} \beta_{m,k,2} ~ \beta_{m,3} ~ \beta_{m,k+2,2} ~ \beta_{m,3}^{-1} ~ \beta_{m,k+1,2}^{-1} ~ \beta_{m,3}^{-1} ~  =1 ; \end{equation}
	\begin{equation}\label{gvbn42} \alpha_{m,k+1,2} ~ \alpha_{m+1,k+1,1} ~ \beta_{m+2,k,2}^{-1} ~ \alpha_{m+1,k,1}^{-1} ~ \alpha_{m,k,2}^{-1} ~ =1 ; \end{equation}
	\begin{equation}\label{gvbn43} \beta_{m,k,2} ~ \alpha_{3} ~ \alpha_{m+1,k+1,2} ~ \beta_{m+2,3}^{-1} ~ \alpha_{m+1,k,2}^{-1} ~ \alpha_{3}^{-1} ~  =1 ; \end{equation}
	\begin{equation}\label{gvbn44} \beta_{m,k,2} ~ \alpha_{m,k+1,1} ~ \alpha_{m+1,k+1,2} ~ \alpha_{m+1,k,2}^{-1} ~ \alpha_{m,k,1}^{-1} ~ =1 ; \end{equation}
	\begin{equation}\label{gvbn45} \beta_{m,3} ~ \alpha_{m,k+1,2} ~ \alpha_{3} ~ \beta_{m+2,k,2}^{-1} ~ \alpha_{3}^{-1} ~ \alpha_{m,k,2}^{-1} ~ =1 . \end{equation}
	\begin{equation}\label{gvbn48}
	\alpha_{m,0,1} = 1.
	\end{equation}
	
	\medskip From \eqnref{gvbn37}, putting $k = 1$, we get:
	\begin{equation}
		\beta_{m+1,3} = \beta_{m,3} ~ \alpha_{m,1,1}.
	\end{equation}
	Using the above relation finitely many times we get:
	\begin{equation}
		\beta_{m,3} = \beta_{0,3} ~ \alpha_{0,1,1} \dots \alpha_{m-1,1,1}, \hbox{ if } m \ge 1;
	\end{equation} 
	\begin{equation}
		\beta_{m,3} = \beta_{0,3} ~ \alpha_{-1,1,1}^{-1} ~ \dots ~ \alpha_{m,1,1}^{-1}, \hbox{ if } m \le -1.
	\end{equation}
	So, we can remove $\beta_{m,3}$, for all $m \ne 0$, from the set of generators by replacing these values in all the other relations.
	
	\medskip After the above replacement \eqref{gvbn45} becomes:
	\begin{equation}
		\beta_{0,3} ~ \alpha_{0,1,1} \dots \alpha_{m-1,1,1} ~ \alpha_{m,k+1,2} ~ \alpha_{3} ~ \beta_{m+2,k,2}^{-1} ~ \alpha_{3}^{-1} ~ \alpha_{m,k,2}^{-1} ~ =1, \hbox{ for } m \ge 1;
	\end{equation}
	\begin{equation}
		\beta_{0,3} ~ \alpha_{-1,1,1}^{-1} ~ \dots ~ \alpha_{m,1,1}^{-1} ~ \alpha_{m,k+1,2} ~ \alpha_{3} ~ \beta_{m+2,k,2}^{-1} ~ \alpha_{3}^{-1} ~ \alpha_{m,k,2}^{-1} ~ =1, \hbox{ for } m \le -1.
	\end{equation}
	\begin{equation}
	\beta_{0,3} ~ \alpha_{0,k+1,2} ~ \alpha_{3} ~ \beta_{2,k,2}^{-1} ~ \alpha_{3}^{-1} ~ \alpha_{0,k,2}^{-1} ~ =1, \hbox{ for } m = 0.
	\end{equation}
	
	From the above relations it is clear that we can express $\beta_{m,k,2}$ in terms of elements from $ \{ ~ \alpha_{m,1,1}, ~ \alpha_{m,k,2}, ~ \alpha_3, ~ \beta_{0,3} ~ | ~ m, k \in \Z ~ \} $. We remove all $\beta_{m,k,2}$ from the generating set after replacing these values in all other relations.
	
	\medskip Next we note that, after the above replacement in \eqref{gvbn40}, iterating the transformed relation finitely many times we can express each $\alpha_{m,k,2}$ in terms of elements from the set 
$$ \{ ~ \alpha_{m,0,2}, ~ \alpha_{m,1,2}, ~ \alpha_{m,2,2}, ~ \alpha_{m,1,1}, ~ \alpha_3, ~ \beta_{0,3} ~ | ~ m \in \Z ~ \}.$$ 
But using \eqref{gvbn39} we can further simplify the expression for $\alpha_{m,k,2}$, and we write $\alpha_{m,k,2}$ in terms of elements from the set 
$$ \{ ~ \alpha_{0,0,2}, ~ \alpha_{1,0,2}, ~ \alpha_{0,1,2}, ~ \alpha_{1,1,2}, ~ \alpha_{0,2,2}, ~ \alpha_{1,2,2}, ~ \alpha_3, ~ \beta_{0,3}, ~ \alpha_{m,1,1} ~ | ~ m \in \Z ~ \}.$$
 Then we replace these values of $\alpha_{m,k,2}$ in other relations and remove  $\alpha_{m,k,2}$ from the generating set.
	
	\medskip Finally we note that \eqref{gvbn36} is still unchanged after all the above replacements. Using this relation we have:
	$$\alpha_{m,k,1} = \alpha_3^{-m} ~ \alpha_{0,k,1} ~ \alpha_3^m.$$
	We replace these values in all the relations and remove all $\alpha_{m,k,1}$ with $m \ne 0$ from the generating set. Then using the relation \eqref{gvbn42} we express all $\alpha_{0,k,1}$ in terms of the elements $ \alpha_{0,0,2}, ~ \alpha_{1,0,2}, ~ \alpha_{0,1,2}, ~ \alpha_{1,1,2}, ~ \alpha_{0,2,2}, ~ \alpha_{1,2,2}, ~ \alpha_3, ~ \beta_{0,3}, ~ \alpha_{0,1,1} $ and remove all $\alpha_{0,k,1}$ except $\alpha_{0,0,1}$. But, by \eqref{gvbn48}, $\alpha_{0,0,1}=1$, hence is removed from the set of generators.
	
	\medskip Hence we have shown that $GVB_4'$ can be generated by the finite set of generators:
	$$\{ \alpha_{0,0,2}, ~ \alpha_{1,0,2}, ~ \alpha_{0,1,2}, ~ \alpha_{1,1,2}, ~ \alpha_{0,2,2}, ~ \alpha_{1,2,2}, ~ \alpha_3, ~ \beta_{0,3}, ~ \alpha_{0,1,1} \}.$$
	This completes the proof of the lemma. \end{proof}

\subsection{Finite generation of $GVB_n', ~ n \ge 5$}\label{gvb5} 

\begin{lemma}\label{gvbnlemma5}
	$GVB_n'$ has a generating set with $3n - 7$ generators, for all $n \ge 5$.
\end{lemma}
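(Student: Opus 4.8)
The plan is to follow the same Tietze-transformation strategy used in the proof of \lemref{gvbnlemma4}, but to track carefully how the generator count scales with $n$. Starting from the presentation in \lemref{gvbnlemma2}, the generating set is $\{\alpha_{m,k,1},~\alpha_{m,k,2},~\beta_{m,k,2},~\alpha_j,~\beta_{m,j}\mid m,k\in\Z,~3\le j\le n-1\}$. The elements $\alpha_j$ with $3\le j\le n-1$ are already finite in number (there are $n-3$ of them) and carry no $(m,k)$ indices, so the burden of the argument is to eliminate the doubly-indexed families $\alpha_{m,k,1}$, $\alpha_{m,k,2}$, $\beta_{m,k,2}$ and the singly-indexed family $\beta_{m,j}$ down to a fixed finite set whose size I can count precisely.

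First I would reduce the $\beta_{m,j}$ families. For $n\ge 5$ we have several $\beta_{m,j}$ with $3\le j\le n-1$, and the analogue of relation \eqnref{gvbn37}, namely \eqnref{gvbn6} with $k=1$, should give a recursion of the form $\beta_{m+1,j}=\beta_{m,j}\,\alpha_{m,1,j}$ (or the $j=3$ special case handled exactly as in \lemref{gvbnlemma4}), allowing me to express every $\beta_{m,j}$ in terms of $\beta_{0,j}$ together with the $\alpha$'s. Next, using \eqnref{gvbn4}, \eqnref{gvbn40}-type relations I eliminate the $\beta_{m,k,2}$ family in terms of the $\alpha_{m,k,2}$, $\alpha_j$, and the finitely many $\beta_{0,j}$. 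Then the relations \eqnref{gvbn13} (the $\beta_{m,k,2}$ recursion in $k$) together with \eqnref{gvbn39}/\eqnref{gvbn11} (the recursion in $m$) collapse the doubly-indexed $\alpha_{m,k,2}$ down to a fixed finite list of ``seed'' values; and finally \eqnref{gvbn36}/\eqnref{gvbn1} of the form $\alpha_{m,k,1}=\alpha_j^{-m}\alpha_{0,k,1}\alpha_j^{m}$ plus \eqnref{gvbn42}-type relations eliminate $\alpha_{m,k,1}$, with $\alpha_{0,0,1}=1$ removed by \eqnref{gvbn0}. This is mechanically the same chain of eliminations as in \lemref{gvbnlemma4}; the only genuinely new bookkeeping is that the extra generators $\alpha_4,\dots,\alpha_{n-1}$ and $\beta_{0,4},\dots,\beta_{0,n-1}$ now appear.

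The main obstacle, and the real content of the lemma, is the exact generator count $3n-7$. The $n=4$ case in \lemref{gvbnlemma4} produced $9$ survivors, namely $\alpha_{0,0,2},\alpha_{1,0,2},\alpha_{0,1,2},\alpha_{1,1,2},\alpha_{0,2,2},\alpha_{1,2,2},\alpha_3,\beta_{0,3},\alpha_{0,1,1}$, and $3\cdot4-7=5\neq 9$, so the formula is claimed only for $n\ge5$ and cannot simply specialize the $n=4$ bookkeeping. I expect that for $n\ge 5$ the additional relations available (in particular \eqnref{gvbn11}, \eqnref{gvbn14}, and the mixed relations \eqnref{gvbn16}-\eqnref{gvbn21} involving $\alpha_3$ and higher indices, which are vacuous or weaker when $n=4$) let one cut down the six seeds $\alpha_{\cdot,\cdot,2}$ much more aggressively, so that the surviving generators organize into a fixed constant number of $\alpha_{\cdot,\cdot,2}$ and $\alpha_{0,1,1}$ seeds plus one $\alpha_j$ and one $\beta_{0,j}$ for each $j$ in the range $3\le j\le n-1$. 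Counting $2(n-3)$ from the $\{\alpha_j,\beta_{0,j}\}$ pairs plus a constant $c$ of remaining seeds and matching $2(n-3)+c=3n-7$ forces $c=n-1$, which cannot be constant; hence the correct accounting must be that the survivors grow like $n$ through a third family as well.

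I therefore anticipate that the honest count keeps \emph{three} linearly-growing families: the $n-3$ generators $\alpha_j$, the $n-3$ generators $\beta_{0,j}$, and one further $n$-dependent family (plausibly the $\beta_{m,3}$ or $\alpha_{m,1,1}$ seeds that cannot all be eliminated because the relevant braid relation \eqnref{gvbn12}/\eqnref{gvbn18} only relates \emph{adjacent} indices $i,i+1$), together with a fixed constant of leftover $\alpha_{\cdot,\cdot,2}$-type seeds, and that these tally to exactly $3n-7$. The delicate part will be verifying that no relation permits further collapse of this third family and, simultaneously, that the constant seeds are pinned down to the right fixed number; this is precisely the place where the $n\ge5$ hypothesis is used, since for $n=5$ the newly-present generators $\alpha_4,\beta_{0,4}$ first activate relations \eqnref{gvbn3},\eqnref{gvbn5},\eqnref{gvbn9} and the triangle relations \eqnref{gvbn11},\eqnref{gvbn14}. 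I would carry out the eliminations in the order $\beta_{m,j}\to\beta_{m,k,2}\to\alpha_{m,k,2}\to\alpha_{m,k,1}$ exactly as above, then count the survivors explicitly and check the arithmetic against $3n-7$, treating the base case $n=5$ by hand to fix the constant before inducting on $n$.
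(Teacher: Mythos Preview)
Your proposal is a sketch with the right overall shape (iterated Tietze eliminations starting from \lemref{gvbnlemma2}), but it has two concrete gaps that prevent it from being a proof.

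First, you misidentify where $n\ge 5$ actually enters. Relations \eqref{gvbn11}, \eqref{gvbn14}, \eqref{gvbn16}--\eqref{gvbn21} are all already present for $n=4$; what is genuinely new for $n\ge 5$ is the block requiring $j\ge 4$, namely \eqref{gvbn2}, \eqref{gvbn4}, \eqref{gvbn7}, \eqref{gvbn8}. The paper's decisive step is \eqref{gvbn7} with $j=4$: it reads $\alpha_{m,k+1,2}=\beta_{m,4}^{-1}\,\alpha_{m,k,2}\,\beta_{m+1,4}$, and iterating collapses the $k$-index of $\alpha_{m,k,2}$ in one stroke (after $\beta_{m,k,2}$ has been eliminated via \eqref{gvbn16}). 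Then \eqref{gvbn11} collapses the $m$-index of $\alpha_{m,0,2}$ down to $\alpha_{0,0,2},\alpha_{1,0,2}$.

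Second, your guess for the surviving generating set is wrong, which is why your arithmetic cannot be made to close. The paper uses \eqref{gvbn6} in the \emph{opposite} direction from you: since $\alpha_{m,0,1}=1$, iterating \eqref{gvbn6} gives $\alpha_{m,k,1}=\beta_{m,j}^{-k}\beta_{m+1,j}^{k}$, so \emph{all} of the $\alpha_{m,k,1}$ are expressed outright in terms of the $\beta_{m,j}$ (no seed $\alpha_{0,1,1}$ survives). Substituting this into \eqref{gvbn1} then yields a two-step recursion in $m$ among the $\beta_{m,j}$, reducing every $\beta_{m,j}$ to $\beta_{0,j},\beta_{1,j},\alpha_j$. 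The survivors are therefore
\[
\{\alpha_{0,0,2},\ \alpha_{1,0,2}\}\ \cup\ \{\alpha_j,\ \beta_{0,j},\ \beta_{1,j}\ :\ 3\le j\le n-1\},
\]
giving $2+3(n-3)=3n-7$. The ``third linearly-growing family'' you could not pin down is simply $\{\beta_{1,j}\}_{3\le j\le n-1}$, and the constant part is just the two $\alpha_{\cdot,0,2}$ seeds, not the nine seeds you carried over from the $n=4$ argument. Your elimination order $\beta_{m,j}\to\beta_{m,k,2}\to\alpha_{m,k,2}\to\alpha_{m,k,1}$ reintroduces the infinite family $\{\alpha_{m,1,1}\}$ as an intermediary; it can be made to work via \eqref{gvbn1}, but it obscures the clean count and you never actually carry it out.
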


\begin{proof}
	To prove this lemma, we will apply different Tietze transformations to the presentation for $GVB_n'$, for $n \ge 5$, deduced in \lemref{gvbnlemma2}.\\
	From \eqref{gvbn16} we get:
	$$\beta_{m,k,2} ~ = \alpha_{m-1,k,1}^{-1} ~ \alpha_{m-2,k,2}^{-1} ~ \alpha_{m-2,k+1,2} ~ \alpha_{m-1,k+1,1}.$$
	We replace these values of $\beta_{m,k,2}$ in all other defining relations and remove $\beta_{m,k,2}$ from the set of generators.
	
	Note that the above substitution does not change the relation \eqref{gvbn7}, which gives:
	$$ \alpha_{m,k+1,2} = \beta_{m,j}^{-1} ~ \alpha_{m,k,2} ~ \beta_{m+1,j}, ~ j \ge 4.$$
	Here we need $n \ge 5$. Choosing $j = 4$, we replace $\alpha_{m,k,2}$ by $\beta_{m,4}^{-k} ~ \alpha_{m,0,2} ~ \beta_{m+1,4}^k$ in all other relations, and remove all $\alpha_{m,k,2}$ with $k \ne 0$ from the generating set.
	
	After this replacement \eqref{gvbn11} becomes:
	$$\beta_{m,4}^{-k} ~ \alpha_{m,0,2} ~ \beta_{m+1,4}^k ~ \alpha_{3} ~ \beta_{m+2,4}^{-k} ~ \alpha_{m+2,0,2} ~ \beta_{m+3,4}^k ~ \alpha_{3}^{-1} ~ \beta_{m+2,4}^{-k} ~ \alpha_{m+1,0,2}^{-1} ~ \beta_{m+1,4}^k ~ \alpha_{3}^{-1} ~ =1.$$
	Note that, using the above relation finitely many times, we can express $\alpha_{m,0,2}$ in terms of $\alpha_{0,0,2}, ~ \alpha_{1,0,2}, ~ \alpha_3, ~ \beta_{m,4}.$ And, we remove all $\alpha_{m,0,2}$ except $\alpha_{0,0,2}$ and $\alpha_{1,0,2}$ from the generating set.
	
	Now look at the relations \eqref{gvbn1} and \eqref{gvbn6}. Note that both the relations are untouched after all the above substitutions. The relation \eqref{gvbn6} gives us:
	$$\alpha_{m,k+1,1} = \beta_{m,j}^{-1} ~ \alpha_{m,k,1} ~ \beta_{m+1,j}$$
	Note here that, $\alpha_{m,0,1} = 1$. So for all $j \ge 3$, using the above relation finitely many times we deduce that,
	\begin{equation}\label{gvbn46}
		\alpha_{m,k,1} = \beta_{m,j}^{-k} ~ \alpha_{m,0,1} ~ \beta_{m+1,j}^k = \beta_{m,j}^{-k} ~ \beta_{m+1,j}^k 
	\end{equation}

	Now, if we put the values of $\alpha_{m,1,1}$ obtained from above relation in \eqref{gvbn1} we have:
	\begin{equation}\label{gvbn47}
	\beta_{m,j}^{-1} ~ \beta_{m+1,j} ~ \alpha_l ~ \beta_{m+2,j}^{-1} ~ \beta_{m+1,j} ~ \alpha_l^{-1} = 1, ~ ~ \hbox{for any} ~ j, ~ l \ge 3.
	\end{equation}
	
	We remove all $\alpha_{m,k,1}$ from the set of generators by replacing the values of $\alpha_{m,k,1}$ as in \eqref{gvbn46} in all the relations.
	
	And finally, for every $j \ge 3$, using \eqref{gvbn47} we can express $\beta_{m,j}$ in terms of $\beta_{0,j}, ~ \beta_{1,j}, ~ \alpha_j$. So for each $j \ge 3$, we can remove all $\beta_{m,j}$ with $m \ne 0, 1$ from the set of generators.
	
	Hence, we can generate $GVB_n'$, for all $~ n \ge 5,$ with the finite generating set:
	$$ \{ ~ \alpha_{0,0,2}, ~ \alpha_{1,0,2}, ~ \alpha_j, ~ \beta_{0,j}, ~ \beta_{1,j} ~ | ~ 3 \le j \le n-1 ~ \}$$
	which has $3n - 7$ elements.
	
	Hence, the proof of the lemma is complete.
	\end{proof}

\subsection{Perfectness of $GVB_n'$} 
\begin{lemma}\label{gvbnlemma6}
	$GVB_n'$ is perfect for $n \ge 5$. 
\end{lemma}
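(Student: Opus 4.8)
The plan is to show $GVB_n'$ is perfect by proving that its abelianization $GVB_n'/(GVB_n')'$ is trivial; equivalently, I want every generator in the presentation of \lemref{gvbnlemma2} to die once commutativity is imposed. Concretely, I would abelianize that presentation, writing the generators additively, and show that the abelianized relators already span the whole free abelian group on the generators. Write $a_{m,k}, b_{m,k}, c_{m,k}$ for the additive images of $\alpha_{m,k,1}, \alpha_{m,k,2}, \beta_{m,k,2}$, and $p_j, q_{m,j}$ for those of $\alpha_j, \beta_{m,j}$.

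First I would use the commuting/braid-type relations to strip index dependence: \eqref{gvbn1} gives $a_{m,k}=a_{m+1,k}$, relation \eqref{gvbn2} gives $b_{m,k}=b_{m+1,k}$, \eqref{gvbn4} gives $c_{m,k}=c_{m,k+1}$, and \eqref{gvbn8} gives $c_{m,k}=c_{m+1,k}$; after these, $a$ and $b$ depend only on $k$ and $c$ is constant, while \eqref{gvbn0} forces $a_{\cdot,0}=0$. Next, the shift relations \eqref{gvbn6} and \eqref{gvbn7} force $a_{k+1}-a_k$, $b_{k+1}-b_k$, and $q_{m+1,j}-q_{m,j}$ all to equal one common constant $D$; combined with \eqref{gvbn10}, whose abelianization reads $a_k=b_k$, this pins $a_k=b_k=kD$.

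The decisive step is relation \eqref{gvbn11}: abelianized it reads $b_k=p_3$, so $kD=p_3$ is independent of $k$, which forces $D=0$ and $p_3=0$. Hence $a_{m,k}=b_{m,k}=0$, and $q_{m,j}$ becomes independent of $m$. The braid relation \eqref{gvbn12} then propagates $p_j=p_3=0$ for all $j$; relation \eqref{gvbn13} collapses to $c=0$, killing the remaining constant $c_{m,k}$; relation \eqref{gvbn14} gives $q_3=0$; and the braid relation \eqref{gvbn15} propagates $q_j=q_3=0$. At this point every generator is trivial in the abelianization, so $GVB_n'$ is perfect.

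The point I would emphasize—and the only place where the hypothesis $n \ge 5$ is genuinely used—is that the relations \eqref{gvbn2}, \eqref{gvbn4}, \eqref{gvbn7}, \eqref{gvbn8} all carry the index restriction $j \ge 4$ (resp.\ $i \ge 4$), so they are non-vacuous exactly when $n-1 \ge 4$. Without an available generator $\alpha_4$ one cannot collapse the $m$-dependence of $b_{m,k}$ nor the $k$-dependence of $c_{m,k}$, and the computation breaks down, consistent with $GVB_4'$ failing to be perfect. The argument is otherwise a bounded, essentially mechanical bookkeeping over the abelianized relators; the only real care required is to apply each relation in the correct order so that each family of generators is eliminated before it is needed in the next step.
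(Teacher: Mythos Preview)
Your argument is correct and follows essentially the same approach as the paper: abelianize the presentation of \lemref{gvbnlemma2} and verify that the relators force every generator to vanish, noting that relations indexed by $j\ge 4$ require $n\ge 5$. The only cosmetic difference is in the bookkeeping for the $\beta$-generators: the paper kills $\beta_{m,k,2}$, $\beta_{m,3}$, and then the higher $\beta_{m,j}$ via \eqref{gvbn19}, \eqref{gvbn20}, \eqref{gvbn21} after all $\alpha$'s are already trivial, whereas you first collapse $c_{m,k}$ to a constant using \eqref{gvbn4} and \eqref{gvbn8} and then finish with \eqref{gvbn13}, \eqref{gvbn14}, \eqref{gvbn15}.
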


\begin{proof}
	We abelianize the presentation for $GVB_n'$ as in \lemref{gvbnlemma2} by inserting the relations of the type $x^{-1}y^{-1}xy = 1$ for all $x, y$ in the generating set, and obtain a presentation for $(GVB_n')^{ab}$. We will now show that $ (GVB_n')^{ab} \cong \langle 1 \rangle $.\\
	
	In the abelianized presentation we observe the following:\\
	
	(1) From \eqref{gvbn1} we get $\alpha_{m+1,k,1}=\alpha_{m,k,1}$ for all $m,k \in \Z$. This implies that $\alpha_{m,k,1}=\alpha_{0,k,1}$ for all $m,k \in \Z.$\\
	
	(2) From \eqref{gvbn2} we get $\alpha_{m+1,k,2}=\alpha_{m,k,2}$ for all $m,k \in \Z$. This implies that $\alpha_{m,k,2}=\alpha_{0,k,2}$ for all $m,k \in \Z.$ (Note that here we need $n \ge 5$.)\\
	
	(3) From \eqref{gvbn10} using the above 2 observations we get: $\alpha_{0,k,1}=\alpha_{0,k,2}$ for all $k \in \Z.$\\
	
	(4) From \eqref{gvbn11} we get: $\alpha_3 = \alpha_{0,k,2}$ for all $k \in \Z$.\\
	
	(5) From \eqref{gvbn12} we get $\alpha_i = \alpha_{i+1}$ for all $3 \le i \le n-2.$\\
	
	(6) From all the above observations we have:
	$$\alpha_{3} = \alpha_{0,0,2} = \alpha_{0,0,1} = 1$$
	$$ \implies \alpha_{m,k,1} = \alpha_{m,k,2} = \alpha_i = 1, \hbox{ for all }  m, k \in \Z, ~ 3 \le i \le n-1.$$
	
	(7) From \eqref{gvbn19} and observation (6) we get: $\beta_{m,k,2}=1$ for all $m,k \in \Z.$\\
	
	(8) From \eqref{gvbn20} and observations (6) and (7) we get: $\beta_{m,3}=1$ for all $m \in \Z.$\\
	
	(9) From \eqref{gvbn21} we get: $\beta_{m,i+1} = \beta_{m+2,i}$ for all $m \in \Z.$ Putting $i=3$ we get $\beta_{m,4} = \beta_{m+2,3} = 1$ for all $m \in \Z.$ Repeated use of this relation for increasing $i$'s gives us: $\beta_{m,i}=1$ for all $m \in \Z, ~ 3 \le i \le n-1.$\\
	
	From the above observations we conclude that, in the presentation for $(GVB_n')^{ab}$ all the generators are equal to 1. So, $(GVB_n')^{ab} \cong \langle 1 \rangle.$ Hence, $GVB_n'$ is perfect for $n \ge 5.$
\end{proof}

\begin{lemma}\label{gvb34}
$GVB_3'$ and $GVB_4'$ are not perfect. 
\end{lemma}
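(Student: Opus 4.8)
The plan is to show that $(GVB_3')^{ab}$ and $(GVB_4')^{ab}$ are both nontrivial, which immediately implies the groups are not perfect. As in the proof of \lemref{gvbnlemma6}, I would abelianize the relevant presentation by inserting commutativity relations $x^{-1}y^{-1}xy = 1$ for all generators $x,y$, and then analyze the resulting abelian presentation to exhibit a surjection onto a nontrivial abelian group (in fact onto $\Z$ in each case).

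For $GVB_4'$, I would start from the presentation in \lemref{gvbnlemma4} (equivalently from \lemref{gvbnlemma2} with $n=4$) and abelianize. Running through the same reductions as in observations (1)--(6) of \lemref{gvbnlemma6}, the key point is to track exactly where perfectness \emph{failed} to be forced for small $n$. The argument in \lemref{gvbnlemma6} used relation \eqref{gvbn2}, which requires $n \ge 5$; for $n = 4$ that relation is absent, so one can no longer conclude $\alpha_{m,k,2} = \alpha_{0,k,2}$, and the chain of deductions collapsing everything to the identity breaks. The plan is therefore to abelianize and show that some generator (for instance the class of $\alpha_{0,0,2}$ or of a suitable $\beta$) survives with infinite order, producing an explicit homomorphism $(GVB_4')^{ab} \twoheadrightarrow \Z$. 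Concretely, I would define an abelian invariant by assigning integer weights to the generators consistently with all the abelianized relations \eqref{gvbn36}--\eqref{gvbn48}, and check that not all weights are forced to vanish.

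For $GVB_3'$, I would use the reduced presentation from \lemref{gvbnlemma3}, with generators $\{\alpha_{m,k,1}, \alpha_{m,k,2} \mid m,k \in \Z\}$ and relations \eqref{gvbn27}--\eqref{gvbn30}. Abelianizing these, relation \eqref{gvbn27} gives $\alpha_{m,k,1}\,\alpha_{m+2,k,1} = \alpha_{m+1,k,1}$ (after cancelling the $\alpha_{\cdot,k,2}$ terms, which pair off in the abelianization), and the remaining relations become additive linear constraints. The cleanest route is to abelianize the \emph{original} three-strand presentation \eqref{gvbn22}--\eqref{gvbn26} and exhibit a nonzero solution to the resulting linear system over $\Z$; for instance one checks that the assignment sending each $\beta_{m,k,2}$ and each $\alpha$-generator to a value determined by a single free $\Z$-parameter is consistent, yielding a surjection onto $\Z$.

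The main obstacle will be bookkeeping rather than conceptual: one must verify that the candidate nontrivial abelian quotient is genuinely consistent with \emph{every} defining relation after abelianization, since it is easy to overlook one relation that would force the surviving generator to be trivial. The efficient way to dispatch this is not to fully simplify the abelianization but simply to produce one explicit nontrivial homomorphism from the abelianized group to $\Z$ (a weight function on generators respecting all relations) in each of the two cases $n=3$ and $n=4$; the existence of such a map certifies $(GVB_n')^{ab} \ne \langle 1\rangle$ and hence non-perfectness, without needing the full structure of the abelianization.
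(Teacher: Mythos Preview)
Your approach is sound in principle but takes a genuinely different route from the paper. The paper does not compute the abelianization at all; instead it argues via Roushon's notion of \emph{adorability}: if $GVB_k'$ were perfect then $GVB_k$ would be adorable (with $doa \le 1$); since $WB_k$ is a homomorphic image of $GVB_k$ via the map $\zeta \circ \gamma$ in diagram~\eqref{gvbndiagram}, and adorability passes to quotients, $WB_k$ would be adorable too; but \cite[Corollary~1.4]{dg1} shows $WB_3$ and $WB_4$ are not adorable, a contradiction. This is a three-line argument that leans entirely on the known structure of $WB_n'$ and avoids any direct computation with the presentation of $GVB_n'$.

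Your plan, by contrast, is hands-on and self-contained, but you have only described the intended verification rather than carried it out. For $n=3$ you are working harder than necessary: \lemref{gvbnlemma3} already produces a surjection $GVB_3' \twoheadrightarrow F^{\infty}$, and abelianizing immediately gives a surjection onto the free abelian group of infinite rank, so non-perfectness is immediate without touching relations \eqref{gvbn22}--\eqref{gvbn26} again. For $n=4$ your diagnosis is correct (the absence of \eqref{gvbn2} is exactly what breaks the collapse in \lemref{gvbnlemma6}), and a nontrivial weight function does exist---indeed the surjection $GVB_4' \twoheadrightarrow WB_4'$ together with $(WB_4')^{ab} \neq 1$ guarantees it---but you have not actually exhibited one, and the relations \eqref{gvbn36}--\eqref{gvbn48} are numerous enough that ``one checks'' is real work, not a formality. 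The paper's adorability argument buys you both cases in one stroke with no bookkeeping; your approach would buy independence from \cite{dg1} at the cost of that bookkeeping.
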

\begin{proof}
To prove this lemma, we shall use the concept of `adorability' of a group defined by Roushon in \cite{rou2}. We recall that a group $G$ is called \emph{adorable} if $G^i/G^{i+1}=1$ for some $i$, where $G^i=[G^{i-1}, G^{i-1}]$ and $G^0=G$ are the terms in the derived series of $G$. The smallest $i$ for which the above property holds, is called the \emph{degree of adorability} of $G$, denoted as $doa(G)$.\\

Let $k=3$ or $4$. Suppose, if possible, $GVB_k'$ is perfect. Then $GVB_k$ is adorable. It follows from \eqref{gvbndiagram}, the welded braid group $WB_k$ is a homomorphic image of $GVB_k$. It follows from \cite[Lemma 1.1]{rou2} that homomorphic image of an adorable group is adorable. But, it is proved in \cite[Corollary 1.4]{dg1}  that $WB_k$ is not adorable. Hence, $GVB_k$ is not adorable. So, in particular,  $GVB_3'$ and $GVB_4'$ are not perfect. 
\end{proof} 
\subsection{Proof of \thmref{gvbnmainth}}\label{gvb}

Combining \lemref{gvbnlemma3}, \lemref{gvbnlemma4}, \lemref{gvbnlemma5}, \lemref{gvbnlemma6}, and \lemref{gvb34}, we have the proof of \hbox{\thmref{gvbnmainth}}.\\

\section{Singular Braid Groups: Proof of \thmref{sgnmainth}}\label{sgn}

We have the following lemma.

\begin{lemma}\label{sgnlemma2}
	The group $SG_n'$ has the following presentation.
	
	Set of generators: $$ \{ ~ \alpha_{m,k,2},~ \beta_{m,k,2},~ \alpha_j,~ \beta_{m,j} ~ | ~ m, k \in \Z,~ 3 \le j \le n-1 ~ \} $$
	
	Set of defining relations:
	\medskip for all $~ m, k \in \Z$,
	\begin{equation}\label{gvbn02} \alpha_{m,k,2} ~ \alpha_{j} ~ \alpha_{m+1,k,2}^{-1} ~ \alpha_{j}^{-1} ~ =1, ~ j \ge 4 ; \end{equation}
	\begin{equation}\label{gvbn03} \alpha_{i} ~ \alpha_{j} ~ \alpha_{i}^{-1} ~ \alpha_{j}^{-1} ~   =1, ~ i,j \ge 3, ~ |i-j| > 1 ; \end{equation}
	\begin{equation}\label{gvbn04} \beta_{m,k,2} ~ \beta_{m,j} ~ \beta_{m,k+1,2}^{-1} ~ \beta_{m,j}^{-1} ~  =1, ~ j \ge 4 ; \end{equation}
	\begin{equation}\label{gvbn05} \beta_{m,i} ~ \beta_{m,j} ~ \beta_{m,i}^{-1} ~ \beta_{m,j}^{-1} ~ =1, ~ i,j \ge 3, ~ |i-j| > 1 ; \end{equation}
	\begin{equation}\label{gvbn06} \beta_{m+1,j} ~ \beta_{m,j}^{-1} ~ =1, ~ j \ge 3 ; \end{equation}
	\begin{equation}\label{gvbn07} \alpha_{m,k,2} ~ \beta_{m+1,j} ~ \alpha_{m,k+1,2}^{-1} ~ \beta_{m,j}^{-1} ~  =1, ~ j \ge 4 ; \end{equation}
	\begin{equation}\label{gvbn08} \alpha_{i} ~ \beta_{m+1,k,2} ~ \alpha_{i}^{-1} ~ \beta_{m,k,2}^{-1} ~ =1, ~ i \ge 4 ; \end{equation}
	\begin{equation}\label{gvbn09} \alpha_{i} ~ \beta_{m+1,j} ~ \alpha_{i}^{-1} ~ \beta_{m,j}^{-1} ~ =1, ~ i,j \ge 3, ~ |i-j|>1 ; \end{equation}
	\begin{equation}\label{gvbn010}  \alpha_{m+1,k,2} ~ \alpha_{m+2,k,2}^{-1} ~ \alpha_{m,k,2}^{-1} ~ =1 ; \end{equation}
	\begin{equation}\label{gvbn011} \alpha_{m,k,2} ~ \alpha_{3} ~ \alpha_{m+2,k,2} ~ \alpha_{3}^{-1} ~ \alpha_{m+1,k,2}^{-1} ~ \alpha_{3}^{-1} ~ =1 ; \end{equation}
	\begin{equation}\label{gvbn012} \alpha_{i} ~ \alpha_{i+1} ~ \alpha_{i} ~ \alpha_{i+1}^{-1} ~ \alpha_{i}^{-1} ~ \alpha_{i+1}^{-1}  =1, ~ i \ge 3; \end{equation}
	\begin{equation}\label{gvbn013} \alpha_{m,k+1,2} ~ \beta_{m+2,k,2}^{-1} ~ \alpha_{m,k,2}^{-1} ~ =1 ; \end{equation}
	\begin{equation}\label{gvbn014} \beta_{m,k,2} ~ \alpha_{3} ~ \alpha_{m+1,k+1,2} ~ \beta_{m+2,3}^{-1} ~ \alpha_{m+1,k,2}^{-1} ~ \alpha_{3}^{-1} ~  =1 ; \end{equation}
	\begin{equation}\label{gvbn015} \beta_{m,i} ~ \alpha_{i+1} ~ \alpha_{i} ~ \beta_{m+2,i+1}^{-1} ~ \alpha_{i}^{-1} ~ \alpha_{i+1}^{-1} ~ =1, ~ i \ge 3 ; \end{equation}
	\begin{equation}\label{gvbn016} \beta_{m,k,2} ~ \alpha_{m+1,k+1,2} ~ \alpha_{m+1,k,2}^{-1} ~ =1 ; \end{equation}
	\begin{equation}\label{gvbn017} \beta_{m,3} ~ \alpha_{m,k+1,2} ~ \alpha_{3} ~ \beta_{m+2,k,2}^{-1} ~ \alpha_{3}^{-1} ~ \alpha_{m,k,2}^{-1} ~ =1 ; 
	\end{equation}
	\begin{equation}\label{gvbn018} \beta_{m,i+1} ~ \alpha_{i} ~ \alpha_{i+1} ~ \beta_{m+2,i}^{-1} ~ \alpha_{i+1}^{-1} ~ \alpha_{i}^{-1} ~ =1, ~ i \ge 3; \end{equation}
	\begin{equation}\label{gvbn020} 
	\alpha_{m,k,2} ~ \beta_{m+1,k,2} ~ \alpha_{m,k+1,2}^{-1} ~ \beta_{m,k,2}^{-1} = 1; \end{equation}
	\begin{equation}\label{gvbn021} 
	\alpha_{i} ~ \beta_{m+1,i} ~ \alpha_{i}^{-1} ~ \beta_{m,i}^{-1} = 1. \end{equation}
		
\end{lemma}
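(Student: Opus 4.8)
The plan is to run the Reidemeister--Schreier algorithm on $SG_n$ in exact parallel with the computation carried out for $GVB_n$ in \lemref{gvbnlemma2}. I would use the same Schreier transversal $\Lambda=\{\sigma_1^m\rho_1^k \mid m,k\in\Z\}$ and the same generating set $\{\alpha_{m,k,i},\beta_{m,k,i}\}$ produced in \lemref{gvbnlemma1}, since these depend only on the common generating set $\{\sigma_i,\rho_i\}$ and not on the relations. Step 3 is then word-for-word identical to the $GVB_n$ case: the pairs $(\lambda,a)$ for which $\lambda a$ is freely equal to $\overline{\lambda a}$ are exactly those giving $\alpha_{m,0,1}=1$ and $\beta_{m,k,1}=1$. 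The only inputs that change are the defining relators fed into Step 4. Relative to $GVB_n$, the group $SG_n$ (i) has no braid relation among the $\rho_i$, only the commuting relation \eqref{mm5}, so the $\rho$-braid relator $\rho_i\rho_{i+1}\rho_i\rho_{i+1}^{-1}\rho_i^{-1}\rho_{i+1}^{-1}$ is absent; and (ii) carries the extra instance of \eqref{mm1} with $i=j$, i.e. $\sigma_i\rho_i=\rho_i\sigma_i$. Thus I would rewrite the $\sigma$-commuting and $\sigma$-braid relators, the $\rho$-commuting relator, and the mixed relators \eqref{mm1}, \eqref{mm2}, \eqref{mm3}, and then reduce.

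The heart of the argument is the rewriting of the mixed relator \eqref{mm1} in its $i=j$ cases, which has no counterpart in $GVB_n$. Applying the rewriting process $\tau$ to $\sigma_i\rho_i\sigma_i^{-1}\rho_i^{-1}$ yields $\alpha_{m,k,i}\,\beta_{m+1,k,i}\,\alpha_{m,k+1,i}^{-1}\,\beta_{m,k,i}^{-1}=1$ for every $i$. For $i=1$, Step 3 gives $\beta_{m,k,1}=1$, so this collapses to $\alpha_{m,k,1}=\alpha_{m,k+1,1}$; combined with $\alpha_{m,0,1}=1$ this forces $\alpha_{m,k,1}=1$ for all $m,k$. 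This is the crucial structural difference from $GVB_n$, where only $\alpha_{m,0,1}=1$ held: here the generators $\alpha_{m,k,1}$ disappear entirely, which is why they are absent from the generating set of the lemma. The cases $i=2$ and $i\ge 3$ of the same relator produce the genuinely new relations \eqref{gvbn020} and \eqref{gvbn021} (the latter after the usual identifications $\alpha_{m,k,j}=\alpha_j$, $\beta_{m,k,j}=\beta_{m,j}$ for $j\ge 3$). Moreover the $i=1,\,j\ge 3$ instance of \eqref{mm1}, which in $GVB_n$ retained the now-trivial $\alpha_{m,k,1}$ factors, collapses to $\beta_{m+1,k,j}=\beta_{m,k,j}$, giving the $m$-independence \eqref{gvbn06}.

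The remaining relators are handled exactly as in \lemref{gvbnlemma2}. The $\sigma$-commuting, $\sigma$-braid, $\rho$-commuting, \eqref{mm2} and \eqref{mm3} relators rewrite to the same families as before, and the eliminations $\alpha_{m,k,j}=\alpha_j$ and $\beta_{m,k,j}=\beta_{m,j}$ for $j\ge 3$ are derived verbatim from the $\sigma$-commuting and $\rho$-commuting relators in their $i=1$ cases, just as in the passage yielding \eqref{eq13} and \eqref{eq10} of the $GVB_n$ proof. Since the $\rho$-braid relator is absent, the three $GVB_n$ relations \eqref{gvbn13}, \eqref{gvbn14}, \eqref{gvbn15} that arose from it simply do not appear. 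Finally, feeding $\alpha_{m,k,1}=1$ into the $i=1$ reductions of the $\sigma$-braid, \eqref{mm2} and \eqref{mm3} relators collapses the longer $GVB_n$ forms to the shorter relations \eqref{gvbn010}, \eqref{gvbn013} and \eqref{gvbn016} recorded here. I expect the only real difficulty to be organisational rather than conceptual: one must carefully propagate the identity $\alpha_{m,k,1}=1$ (valid for all $k$, not merely $k=0$) through every rewritten relator and verify that each case $(i,j)$ lands on the stated relation, checking in particular that the vanishing of the $\alpha_{m,k,1}$ factors does not silently discard information carried by the mixed relators. Once this bookkeeping is complete, collecting the surviving relations gives precisely the presentation claimed.
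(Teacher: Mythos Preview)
Your proposal is correct and follows essentially the same approach as the paper: run Reidemeister--Schreier with the same transversal, reuse the $GVB_n$ rewritings for the common relators, and handle separately the new relator $\sigma_i\rho_i\sigma_i^{-1}\rho_i^{-1}$, whose $i=1$ case forces $\alpha_{m,k,1}=1$ for all $m,k$ and whose $i=2$ and $i\ge 3$ cases yield \eqref{gvbn020} and \eqref{gvbn021}. One small imprecision: the elimination $\alpha_{m,k,j}=\alpha_j$ for $j\ge 3$ uses not only the $\sigma$-commuting relator (for $m$-independence) but also the mixed relator $\sigma_i\rho_1=\rho_1\sigma_i$ (for $k$-independence), though your reference to the passage producing \eqref{eq13} and \eqref{eq10} shows you have the right derivation in mind.
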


\begin{proof} 
We execute Step 3 of the Reidemeister-Schreier algorithm as we did in case of $GVB_n'$ and we get the same relations:
$$S_{\sigma_1^m, \sigma_1}=1, ~ \hbox{ i.e. } ~ \alpha_{m,0,1}=1;$$
$$S_{\sigma_1^m \rho_1^k, \rho_1}=1, ~ \hbox{ i.e. } ~ \beta_{m,k,1}=1.$$

Note that $SG_n$ and $GVB_n$ both have the braid relations among $\sigma_i$, and the relations \eqnref{mm5}, \eqnref{mm2}, \eqnref{mm3}, \eqnref{mm4} common in their presentations. So, certain defining relations in $SG_n'$ are obtained immediately from the proof of \lemref{gvbnlemma2}. What remains to do is to re-write the defining relations for $SG_n$ which are disjoint from the set of defining relations for $GVB_n$. In the following we sketch all the computations for completeness. 

\medskip Following Step 4 of the Reidemeister-Schreier algorithm, we calculate the terms $\tau(\lambda r_{\mu} \lambda^{-1})$ for each $\lambda \in \Lambda$ and for each of the defining relations $r_{\mu}=1$ of $SG_n$, as follows:
$$r_1 = \sigma_i \sigma_j \sigma_i^{-1} \sigma_j^{-1} = 1, \ |i-j|>1;$$
$$r_2 = \rho_i \rho_j \rho_i^{-1} \rho_j^{-1} = 1, \ |i-j|>1;$$
$$r_3 = \sigma_i \rho_j \sigma_i^{-1} \rho_j^{-1} = 1, \ |i-j|>1;$$
$$r_4 = \sigma_i \sigma_{i+1} \sigma_i \sigma_{i+1}^{-1} \sigma_i^{-1} \sigma_{i+1}^{-1}=1;$$
$$r_6 = \rho_i \sigma_{i+1} \sigma_i \rho_{i+1}^{-1} \sigma_i^{-1} \sigma_{i+1}^{-1} = 1;$$
$$r_7 = \rho_{i+1} \sigma_i \sigma_{i+1} \rho_i^{-1} \sigma_{i+1}^{-1} \sigma_i^{-1} = 1;$$
$$r_8 = \sigma_i \rho_i \sigma_i^{-1} \rho_i^{-1} = 1.$$

\medskip Thus we get the following set of defining relations for $SG_n'$.
\begin{equation}\label{eq01}
\alpha_{m,0,1}=1;
\end{equation}
\begin{equation}\label{eq02}
\beta_{m,k,1}=1;
\end{equation}
\begin{equation}\label{eq03}
\alpha_{m,k,i} ~ \alpha_{m+1,k,j} ~ \alpha_{m+1,k,i}^{-1} ~ \alpha_{m,k,j}^{-1} = 1,~ |i-j|>1;
\end{equation}
\begin{equation}\label{eq04}
\beta_{m,k,i} ~ \beta_{m,k+1,j} ~ \beta_{m,k+1,i}^{-1} ~ \beta_{m,k,j}^{-1} = 1,~ |i-j|>1;
\end{equation}
\begin{equation}\label{eq05}
\alpha_{m,k,i} ~ \beta_{m+1,k,j} ~ \alpha_{m,k+1,i}^{-1} ~ \beta_{m,k,j}^{-1} = 1,~ |i-j|>1;
\end{equation}
\begin{equation}\label{eq06}
\alpha_{m,k,i} ~ \alpha_{m+1,k,i+1} ~ \alpha_{m+2,k,i} ~ \alpha_{m+2,k,i+1}^{-1} ~ \alpha_{m+1,k,i}^{-1} ~ \alpha_{m,k,i+1}^{-1} = 1;
\end{equation}
\begin{equation}\label{eq07}
		\beta_{m,k,i} ~ \alpha_{m,k+1,i+1} ~ \alpha_{m+1,k+1,i} ~ \beta_{m+2,k,i+1}^{-1} ~ \alpha_{m+1,k,i}^{-1} ~ \alpha_{m,k,i+1}^{-1} = 1;
	\end{equation}
	\begin{equation}\label{eq08}
		\beta_{m,k,i+1} ~ \alpha_{m,k+1,i} ~ \alpha_{m+1,k+1,i+1} ~ \beta_{m+2,k,i}^{-1} ~ \alpha_{m+1,k,i+1}^{-1} ~ \alpha_{m,k,i}^{-1} = 1;
	\end{equation}
	\begin{equation}\label{eq09}
\alpha_{m,k,i} ~ \beta_{m+1,k,i} ~ \alpha_{m,k+1,i}^{-1} ~ \beta_{m,k,i}^{-1} = 1.
\end{equation}

As we did in the proof of \lemref{gvbnlemma2}, we do similar computations to observe that:
$$\alpha_{m,k,j} = \alpha_{0,0,j},~ j \ge 3,~ m,k \in \Z;$$
$$\beta_{m,k,j} = \beta_{m,0,j},~ j \ge 3, ~ m,k \in \Z.$$

Using the above relations, we replace $\alpha_{m,k,j}$ by $\alpha_{0,0,j}$ for all $(m, k) \ne (0,0)$ and $j \ge 3$, in all the defining relations, and remove all $\alpha_{m,k,j}$ with $(m, k) \ne (0,0),~ j \ge 3$, from the set of generators. After this replacement we denote $\alpha_{0,0,j}$ simply by $\alpha_j$ for $j \ge 3$.\\

Similarly, we replace $\beta_{m,k,j}$ by $\beta_{m,0,j}$ for all $k \ne 0,~ j \ge 3,~ m \in \Z,$ in all the defining relations, and remove all $\beta_{m,k,j}$ with $k \ne 0, ~ j \ge 3, ~ m \in \Z,$ from the set of generators. After this replacement we denote $\beta_{m,0,j}$ simply by $\beta_{m,j}$ for $j \ge 3, ~ m \in \Z.$\\

Using \eqref{eq02} we replace $\beta_{m,k,1}$ by 1, for all $m,k \in \Z$, in all the defining relations, and remove all $\beta_{m,k,1}$ from the set of generators.\\

Finally, putting $i=1$ in \eqref{eq09} and using \eqref{eq01} we get: $\alpha_{m,k,1} = 1 $ for all $m,k \in \Z.$ We replace $\alpha_{m,k,1}$ by 1, for all $m,k \in \Z$, in all the defining relations, and remove all $\alpha_{m,k,1}$ from the set of generators.\\

After incorporating all the replacements and notation changes as described above we get the following set of generators for $SG_n'$:
$$\{ ~ \alpha_{m,k,2},~ \beta_{m,k,2},~ \alpha_j,~ \beta_{m,j} ~ | ~ m, k \in \Z,~ 3 \le j \le n-1 ~ \},$$

and the following list of defining relations for $SG_n'$.\\

For \eqref{eq03} we have the following 3 possible cases:\\
Case 1: $i=1, ~ j \ge 3 ~$ or, $j=1, ~ i \ge 3$; gives no nontrivial relation.\\
Case 2: $i=2, ~ j \ge 4 ~$ or, $j=2, ~ i \ge 4$; gives the relations: \eqref{gvbn02}.\\
Case 3: $i,j \ge 3, ~ |i-j|>1$; gives the relations: \eqref{gvbn03}.\\

For \eqref{eq04} we have the following 3 possible cases:\\
Case 1: $i=1, ~ j \ge 3 ~$ or, $j=1, ~ i \ge 3$; gives no nontrivial relation.\\
Case 2: $i=2, ~ j \ge 4 ~$ or, $j=2, ~ i \ge 4$; gives the relations: \eqref{gvbn04}.\\
Case 3: $i,j \ge 3, ~ |i-j|>1$; gives the relations: \eqref{gvbn05}.\\

For \eqref{eq05} we have the following 5 possible cases:\\
Case 1: $i=1, ~ j \ge 3$; gives the relations: \eqref{gvbn06}.\\
Case 2: $j=1, ~ i \ge 3$; gives no nontrivial relation.\\
Case 3: $i=2, ~ j \ge 4$; gives the relations: \eqref{gvbn07}.\\
Case 4: $j=2, ~ i \ge 4$; gives the relations: \eqref{gvbn08}.\\
Case 5: $i,j \ge 3, ~ |i-j|>1$; gives the relations: \eqref{gvbn09}.\\

For \eqref{eq06} we have the following 3 possible cases:\\
Case 1: $i=1$; gives the relations: \eqref{gvbn010}.\\
Case 2: $i=2$; gives the relations: \eqref{gvbn011}.\\
Case 3: $i \ge 3$; gives the relations: \eqref{gvbn012}.\\

	For \eqref{eq07} we have the following 3 possible cases:\\
	Case 1: $i=1$; gives the relations: \eqnref{gvbn013}.\\
	Case 2: $i=2$; gives the relations: \eqnref{gvbn014}.\\
	Case 3: $i \ge 3$; gives the relations: \eqnref{gvbn015}.\\

	For \eqref{eq08} we have the following 3 possible cases:\\
	Case 1: $i=1$; gives the relations: \eqnref{gvbn016}.\\
	Case 2: $i=2$; gives the relations: \eqnref{gvbn017}.\\
	Case 3: $i \ge 3$; gives the relations: \eqnref{gvbn018}.\\

	For \eqref{eq09} we have the following 3 possible cases:\\
	Case 1: $i=1$; gives no nontrivial relation.\\
	Case 2: $i=2$; gives the relations: \eqnref{gvbn020}.\\
	Case 3: $i \ge 3$; gives the relations: \eqnref{gvbn021}.\\

This completes the proof of the lemma.\end{proof}

We have the following lemma.

\begin{lemma}\label{sgnlemma3}
	$SG_n'$ is finitely generated for all $n \ge 5$.
\end{lemma}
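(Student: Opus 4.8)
The plan is to mirror the Tietze-transformation strategy used for $GVB_n'$ in \lemref{gvbnlemma5}, starting from the presentation in \lemref{sgnlemma2} and successively eliminating families of generators until only finitely many remain. The key structural observation is relation \eqref{gvbn06}, which reads $\beta_{m+1,j} = \beta_{m,j}$ for all $j \ge 3$; iterating this collapses the entire family $\{\beta_{m,j}\}$ down to $\beta_{0,j}$, so each $\beta_{m,j}$ may be replaced by $\beta_{0,j}$ and removed from the generating set. This is a genuine simplification over the $GVB_n'$ case, where the analogous $\beta_{m,j}$'s were tied together only through the more complicated relation \eqref{gvbn6} and could not be collapsed in the index $m$ so cheaply.

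First I would use \eqref{gvbn016}, namely $\beta_{m,k,2} = \alpha_{m+1,k,2}\,\alpha_{m+1,k+1,2}^{-1}$, to express every $\beta_{m,k,2}$ in terms of the $\alpha_{m,k,2}$'s and remove the $\beta_{m,k,2}$ from the generators, substituting into all other relations. Next I would exploit \eqref{gvbn07}, which after the collapse of $\beta_{m,j}$ to $\beta_{0,j}$ takes the form $\alpha_{m,k+1,2} = \beta_{0,j}^{-1}\,\alpha_{m,k,2}\,\beta_{0,j}$ for $j \ge 4$; here $n \ge 5$ is needed so that an index $j=4$ is available. Iterating in $k$ then writes each $\alpha_{m,k,2}$ as a conjugate of $\alpha_{m,0,2}$ by a power of $\beta_{0,4}$, eliminating the dependence on $k$. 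Finally, relation \eqref{gvbn011} (the braid-type relation involving $\alpha_3$) relates $\alpha_{m,0,2}$, $\alpha_{m+1,0,2}$ and $\alpha_{m+2,0,2}$, so that by induction every $\alpha_{m,0,2}$ is expressible in terms of $\alpha_{0,0,2}$, $\alpha_{1,0,2}$, $\alpha_3$ and $\beta_{0,4}$, cutting the dependence on $m$ as well.

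After these eliminations the surviving generators are $\alpha_{0,0,2}$, $\alpha_{1,0,2}$, the finitely many $\alpha_j$ for $3 \le j \le n-1$, and the finitely many $\beta_{0,j}$ for $3 \le j \le n-1$, which is a finite set; this proves finite generation. I would check carefully that each elimination step leaves the relations used in subsequent steps untouched, exactly as is done in \lemref{gvbnlemma5}: for instance \eqref{gvbn07} must survive the $\beta_{m,k,2}$-substitution, and \eqref{gvbn011} must survive the $\alpha_{m,k,2}$-to-$\alpha_{m,0,2}$ reduction.

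The main obstacle I anticipate is bookkeeping rather than conceptual: one must verify that substituting the expression for $\beta_{m,k,2}$ from \eqref{gvbn016} into the remaining mixed relations \eqref{gvbn013}, \eqref{gvbn014}, \eqref{gvbn017}, \eqref{gvbn020} does not reintroduce generators already eliminated or obstruct the conjugation argument via \eqref{gvbn07}. In particular the step that reduces $\alpha_{m,k,2}$ in the variable $k$ is the crucial place where the hypothesis $n \ge 5$ enters, since it requires a generator $\beta_{0,4}$ that exists only when there is an index $j=4$; this is precisely the reason the argument fails for $n=4$ and must be handled separately. Once the order of elimination is fixed so that every relation needed downstream is preserved, the finite generating set falls out directly.
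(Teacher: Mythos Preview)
Your proposal is correct and follows essentially the same Tietze-transformation strategy as the paper's proof, arriving at the identical finite generating set $\{\alpha_{0,0,2},\ \alpha_{1,0,2},\ \alpha_j,\ \beta_{0,j}\mid 3\le j\le n-1\}$. The only tactical differences are in the choice of relations at two steps: you eliminate $\beta_{m,k,2}$ via \eqref{gvbn016} (writing it directly as a word in the $\alpha_{m,k,2}$'s), whereas the paper uses \eqref{gvbn04} and \eqref{gvbn08} to conjugate down to $\beta_{0,0,2}$; and for the final $m$-reduction of $\alpha_{m,0,2}$ you invoke \eqref{gvbn011}, while the paper uses the simpler relation \eqref{gvbn010}, which after the $k$-reduction becomes $\alpha_{m+2,0,2}=\alpha_{m,0,2}^{-1}\alpha_{m+1,0,2}$ with no $\alpha_3$ or $\beta_{0,4}$ appearing. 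Both routes work, and your version has the mild advantage that the $\beta_{m,k,2}$-elimination via \eqref{gvbn016} does not itself require $n\ge 5$; the hypothesis enters only at the \eqref{gvbn07} step, exactly as you note.
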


\begin{proof}
	To prove this lemma, we will apply different Tietze transformations to the presentation for $SG_n'$ deduced in \lemref{sgnlemma2}.\\
	
	From \eqref{gvbn08} we get:
	$$ \beta_{m,k,2} ~ = ~ \alpha_{4}^{-m} ~ \beta_{0,k,2} ~ \alpha_{4}^m.$$
	
	\noindent From \eqref{gvbn04} we get:
	$$\beta_{m,k,2} ~ = ~ \beta_{m,4}^{-k} ~ \beta_{m,0,2} ~ \beta_{m,4}^k.$$
	
	Using the above 2 sets of relations we can express each $\beta_{m, k, 2}$ in terms of $\beta_{0,0,2}, ~ \alpha_4, ~ \beta_{m,4}$. Note that, here we need $n \ge 5$. We replace these values of $\beta_{m,k,2}$ in all other defining relations and remove all $\beta_{m,k,2}$, except $\beta_{0,0,2}$, from the set of generators.
	
	From \eqref{gvbn06} we get: for each $j \ge 3$, $\beta_{m,j}=\beta_{0,j}$ for all $m \in \Z$.
	
	We replace $\beta_{m,j}$ by $\beta_{0,j}$ in all other relations, and remove all $\beta_{m,j}$ with $m \ne 0$ from the generating set.\\
	
	After the above substitution, the relation \eqref{gvbn07} gives:
	$$ \alpha_{m,k+1,2} = \beta_{0,j}^{-1} ~ \alpha_{m,k,2} ~ \beta_{0,j}, ~ j \ge 4.$$
	Choosing $j = 4$, we replace $\alpha_{m,k,2}$ by $\beta_{0,4}^{-k} ~ \alpha_{m,0,2} ~ \beta_{0,4}^k$ in all other relations, and remove all $\alpha_{m,k,2}$ with $k \ne 0$ from the generating set.\\
	
	After this replacement \eqref{gvbn010} becomes:
	$$\beta_{0,4}^{-k} ~ \alpha_{m+1,0,2} ~ \beta_{0,4}^k ~ \beta_{0,4}^{-k} ~ \alpha_{m+2,0,2}^{-1} ~ \beta_{0,4}^k ~ \beta_{0,4}^{-k} ~ \alpha_{m,0,2}^{-1} ~ \beta_{0,4}^k ~ =1$$
	$$\iff \alpha_{m+1,0,2} ~ \alpha_{m+2,0,2}^{-1} ~ \alpha_{m,0,2}^{-1} ~ =1.$$
	Note that, using the above relation finitely many times, we can express $\alpha_{m,0,2}$ in terms of $\alpha_{0,0,2}$ and $\alpha_{1,0,2}.$ And, we remove all $\alpha_{m,0,2}$ except $\alpha_{0,0,2}$ and $\alpha_{1,0,2}$ from the generating set.

	Hence, we can generate $SG_n'$, for all $~ n \ge 5,$ with the finite generating set:
	$$ \{ ~ \alpha_{0,0,2}, ~ \alpha_{1,0,2}, ~ \alpha_j, ~ \beta_{0,j} ~ | ~ 3 \le j \le n-1 ~ \},$$
	containing $2n-4$ elements.\\
	
	Hence, proof of the lemma is complete.
\end{proof}

Now, we prove the following lemma.

\begin{lemma}\label{sgnlemma4}
	$SG_n'$ is perfect for all $n \ge 5$.
\end{lemma}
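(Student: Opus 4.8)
The plan is to follow the strategy of \lemref{gvbnlemma6} verbatim in spirit: abelianize the presentation of $SG_n'$ from \lemref{sgnlemma2} by adjoining $x^{-1}y^{-1}xy=1$ for all generators $x,y$, and then show that in the resulting abelian group every generator is forced to be trivial, so that $(SG_n')^{ab}\cong\langle 1\rangle$ and hence $SG_n'$ is perfect. All the work is a bookkeeping of the relations \eqref{gvbn02}--\eqref{gvbn021} after abelianization, carried out in an order arranged so that each step feeds the next. Since the generators of $SG_n'$ are $\alpha_{m,k,2},\ \beta_{m,k,2},\ \alpha_j,\ \beta_{m,j}$ (note that, unlike the $GVB_n$ case, the generators $\alpha_{m,k,1}$ have already been eliminated in \lemref{sgnlemma2}), it suffices to kill these four families.

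First I would dispose of the $\alpha$'s. Relation \eqref{gvbn02} abelianizes to $\alpha_{m,k,2}=\alpha_{m+1,k,2}$, so for each fixed $k$ the element $\alpha_{m,k,2}$ is independent of $m$; this is exactly the step that requires $n\ge 5$, since \eqref{gvbn02} is vacuous unless some index $j\ge 4$ is available, i.e. unless the generator $\alpha_4$ exists. Feeding this into \eqref{gvbn010}, whose abelianization is $\alpha_{m+1,k,2}\,\alpha_{m+2,k,2}^{-1}\,\alpha_{m,k,2}^{-1}=1$, collapses all three equal factors to give $\alpha_{m,k,2}^{-1}=1$, hence $\alpha_{m,k,2}=1$ for all $m,k$. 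With these gone, \eqref{gvbn011} reads $\alpha_3^{-1}=1$, so $\alpha_3=1$, and \eqref{gvbn012} gives $\alpha_i=\alpha_{i+1}$, forcing $\alpha_i=1$ for all $3\le i\le n-1$.

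It then remains to kill the $\beta$'s, which is now immediate. With every $\alpha$ trivial, relation \eqref{gvbn013} reduces to $\beta_{m+2,k,2}^{-1}=1$, so $\beta_{m,k,2}=1$ for all $m,k$; relation \eqref{gvbn014} then reduces to $\beta_{m+2,3}^{-1}=1$, so $\beta_{m,3}=1$ for all $m$; and finally \eqref{gvbn015} abelianizes to $\beta_{m,i}=\beta_{m+2,i+1}$, which, bootstrapping upward from $\beta_{m,3}=1$ through increasing $i$, yields $\beta_{m,i}=1$ for all $m$ and all $3\le i\le n-1$. At this point every generator equals $1$, so $(SG_n')^{ab}\cong\langle 1\rangle$ and $SG_n'$ is perfect. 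I expect no genuine obstacle here: the only real content is the ordering of the eliminations and the essential appeal to $n\ge 5$ in \eqref{gvbn02} --- precisely the step that fails for $n=4$, where no $\alpha_4$ exists, $\alpha_{m,k,2}$ need not be $m$-invariant, and the cascade never starts (consistent with $SG_4'$ being non-perfect). That single point is what I would verify most carefully.
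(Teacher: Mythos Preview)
Your proof is correct and follows essentially the same approach as the paper: abelianize the presentation of \lemref{sgnlemma2} and show each generator is trivial, with the key use of $n\ge 5$ occurring exactly at \eqref{gvbn02}. The only difference is cosmetic ordering---the paper first uses \eqref{gvbn011}, \eqref{gvbn012}, \eqref{gvbn010} to identify and kill the $\alpha$'s, and \eqref{gvbn04}, \eqref{gvbn08}, \eqref{gvbn06} before \eqref{gvbn014}, \eqref{gvbn015}, \eqref{gvbn013} for the $\beta$'s, whereas you kill $\alpha_{m,k,2}$ first via \eqref{gvbn010} and then let \eqref{gvbn013}--\eqref{gvbn015} do all the work on the $\beta$'s; if anything your route is slightly more economical.
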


\begin{proof}
	Let $n \ge 5$. We abelianize the presentation for $SG_n'$ as given in \lemref{sgnlemma2} by inserting the commuting relations $~xy=yx~$ into the set of defining relations, for all $x,y \in \{ ~ \alpha_{m,k,2}, ~ \beta_{m,k,2}, ~ \alpha_j, ~ \beta_{m,j} ~ | ~ m,k \in \Z, ~ 3 \le j \le n-1 \}$. We observe the following.\\
	
	From \eqref{gvbn02} we get: $\alpha_{m+1,k,2} = \alpha_{m,k,2},~ \forall m,k \in \Z \implies \alpha_{m,k,2} = \alpha_{0,k,2},~ \forall m,k \in \Z.$\\
	We replace $\alpha_{m,k,2}$ by $\alpha_{0,k,2}$ in all the relations, and we remove all $\alpha_{m,k,2}$ from the set of generators except $\alpha_{0,k,2}$.\\

	From \eqref{gvbn011} we get: $\alpha_{0,k,2} = \alpha_3,~ \forall k \in \Z.$\\
	From \eqref{gvbn012} we get: $ \alpha_3 = \alpha_4 = \dots = \alpha_{n-1}.$\\
	From \eqref{gvbn010} we get: $\alpha_{0,k,2} = 1$.\\
	
	Hence we have $~ 1 = \alpha_{0,k,2} = \alpha_j$ for all $k \in \Z, ~ 3 \le j \le n-1.$ We replace all $\alpha_{0,k,2},~ \alpha_j$ by 1 in all defining relations, and remove them from the set of generators.\\
	
	From \eqref{gvbn04} and \eqref{gvbn08} we get: $\beta_{m,k,2} = \beta_{0,0,2}, ~ \forall m, k \in \Z.$\\
	From \eqref{gvbn06} we get: $\beta_{m+1,j} = \beta_{m,j}, ~ \forall m \in \Z,~ 3 \le j \le n-1.$\\ $ \hbox{ \ \ \ \ \ \ \ \ \ \ \ \ \ \ \ \ \ \ \ \ \ \ \ \ \ \ \ \ \ \ } \implies ~ \beta_{m,j} = \beta_{0,j}, ~ \forall m \in \Z,~ 3 \le j \le n-1.$\\
	Hence, we replace all $ ~ \beta_{m,k,2},~ \beta_{m,j} ~$ by $\beta_{0,0,2}, ~ \beta_{0,j}$ respectively, in all defining relations, and remove all $ ~ \beta_{m,k,2},~ \beta_{m,j} ~$ from the set of generators except $\beta_{0,0,2}$, $~ \beta_{0,j}$.\\
	
	From \eqref{gvbn014} we get: $\beta_{0,0,2}=\beta_{0,3}.$\\
	From \eqref{gvbn015} we have: $\beta_{0,3}=\beta_{0,4}= \dots = \beta_{0,n-1}.$\\
	From \eqref{gvbn013} we have: $\beta_{0,0,2}=1.$\\
	So, we have $~ 1 = \beta_{0,0,2} = \beta_{0,j}$ for all $~ 3 \le j \le n-1.$ Hence, we replace $\beta_{0,0,2}, ~ \beta_{0,j}$ by 1 in all defining relations, and remove them from the set of generators.\\
	
	After all these replacements, clearly, we are left with the trivial group. Hence, for $n \ge 5$, $(SG_n')^{ab} \cong \{1\}$, i.e. $SG_n'$ is perfect. This proves the lemma. \end{proof}

We have the following lemma.

\begin{lemma}\label{sgnlemma5}
	$SG_3'/SG_3'' $ is isomorphic to the free abelian group of infinite rank $~ \bigoplus \limits_{i=1}^{\infty} \Z ~$.
\end{lemma}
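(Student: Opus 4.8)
The plan is to specialize the presentation of $SG_n'$ given in \lemref{sgnlemma2} to the case $n=3$ and then abelianize. When $n=3$ the index range $3 \le j \le n-1$ is empty, so the generators $\alpha_j$ and $\beta_{m,j}$ do not occur, and the generating set reduces to $\{\alpha_{m,k,2},\,\beta_{m,k,2}\mid m,k\in\Z\}$. Correspondingly, every defining relation in \lemref{sgnlemma2} that mentions some $\alpha_j$ or $\beta_{m,j}$, or that requires an index $\ge 3$ or $\ge 4$, becomes vacuous; the only surviving relations are \eqref{gvbn010}, \eqref{gvbn013}, \eqref{gvbn016} and \eqref{gvbn020}. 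First I would record this reduced presentation of $SG_3'$ explicitly, on the two families $\alpha_{m,k,2},\beta_{m,k,2}$ subject to exactly these four relation families.

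Next I would pass to the abelianization $SG_3'/SG_3''$ by adjoining all commutators and switching to additive notation, writing $a_{m,k}$ for $\alpha_{m,k,2}$ and $b_{m,k}$ for $\beta_{m,k,2}$. In this notation \eqref{gvbn016} reads $b_{m,k}=a_{m+1,k}-a_{m+1,k+1}$, which lets me eliminate every $b_{m,k}$ and express the group purely in terms of the $a_{m,k}$. After this elimination \eqref{gvbn010} becomes the linear recurrence $a_{m+2,k}=a_{m+1,k}-a_{m,k}$ in the index $m$ (for each fixed $k$), while \eqref{gvbn013} and \eqref{gvbn020} become relations involving only the $a_{m,k}$.

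The key computational observation is that the recurrence $a_{m+2,k}=a_{m+1,k}-a_{m,k}$ has characteristic polynomial $x^2-x+1$, whose roots are primitive sixth roots of unity; concretely it yields $a_{m+3,k}=-a_{m,k}$ and hence period $6$ in $m$. Using $a_{m+3,k}=-a_{m,k}$ I would check that, after the elimination of the $b$'s, both \eqref{gvbn013} and \eqref{gvbn020} collapse to the trivial identity $0=0$, so in the abelianization they impose nothing beyond the recurrence coming from \eqref{gvbn010}. This redundancy is the crux of the argument: the main work is verifying that the two ``mixed'' relations contribute no new constraint once periodicity is available, rather than any hard structural fact.

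Finally, for each fixed $k$ the recurrence writes every $a_{m,k}$ as an integral linear combination of $a_{0,k}$ and $a_{1,k}$ and imposes no relation between these two elements, since the bi-infinite recurrence is consistent and free on its two initial terms. As no relation links indices belonging to different values of $k$, the abelianization decomposes as $\bigoplus_{k\in\Z}(\Z\, a_{0,k}\oplus\Z\, a_{1,k})$, which is free abelian on the countable generating set $\{a_{0,k},a_{1,k}\mid k\in\Z\}$. Therefore $SG_3'/SG_3''\cong\bigoplus_{i=1}^{\infty}\Z$, as claimed.
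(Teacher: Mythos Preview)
Your proposal is correct and follows essentially the same approach as the paper: specialize the presentation of \lemref{sgnlemma2} to $n=3$, abelianize, use \eqref{gvbn016} to eliminate the $\beta_{m,k,2}$'s, verify that the remaining relations \eqref{gvbn013} and \eqref{gvbn020} become consequences of the recurrence \eqref{gvbn010}, and conclude that the abelianization is free abelian on $\{\alpha_{0,k,2},\alpha_{1,k,2}\mid k\in\Z\}$. Your observation about the characteristic polynomial and the identity $a_{m+3,k}=-a_{m,k}$ is a pleasant way to organize the verification, but it amounts to the same computation the paper performs directly.
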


\begin{proof}
	Note that, $SG_3'$ has the following presentation:\\
	
	set of generators: $ \{ ~ \alpha_{m,k,2},~ \beta_{m,k,2} ~ | ~ m,k \in \Z ~ \}, $\\
	
	set of defining relations: for all $m,k \in \Z,$
	
	$$ \alpha_{m+1,k,2} ~ \alpha_{m+2,k,2}^{-1} ~ \alpha_{m,k,2}^{-1} ~ =1;$$
	$$\alpha_{m,k+1,2} ~ \beta_{m+2,k,2}^{-1} ~ \alpha_{m,k,2}^{-1} ~ =1;$$
	$$\beta_{m,k,2} ~ \alpha_{m+1,k+1,2} ~ \alpha_{m+1,k,2}^{-1} ~ =1;$$
	$$\alpha_{m,k,2} ~ \beta_{m+1,k,2} ~ \alpha_{m,k+1,2}^{-1} ~ \beta_{m,k,2}^{-1} = 1.$$

	We abelianize the above presentation for $SG_3'$ by inserting the commuting relations $~xy=yx~$ into the set of defining relations, for all $x,y \in \{ ~ \alpha_{m,k,2}, ~ \beta_{m,k,2} ~ | ~ m,k \in \Z \}$. In the abelianization $(SG_3')^{ab} = SG_3'/SG_3''$ we observe the following.\\
	
	We have $~ \beta_{m,k,2} ~ \alpha_{m+1,k+1,2} ~ \alpha_{m+1,k,2}^{-1} ~ =1 \iff \beta_{m,k,2} = \alpha_{m+1,k,2} ~ \alpha_{m+1,k+1,2}^{-1}~.$\\
	We replace all $\beta_{m,k,2}$ by $\alpha_{m+1,k,2} ~ \alpha_{m+1,k+1,2}^{-1}$ in all the defining relations, and remove all $\beta_{m,k,2}$ from the set of generators. So, apart from all the commuting relations, we get the following defining relations for $(SG_3')^{ab}$.
	
	$$ \alpha_{m+1,k,2} ~ \alpha_{m+2,k,2}^{-1} ~ \alpha_{m,k,2}^{-1} ~ =1;$$
	$$\alpha_{m+3,k,2} ~ \alpha_{m+3,k+1,2}^{-1} = \alpha_{m,k,2}^{-1} ~ \alpha_{m,k+1,2};$$
	$$\alpha_{m,k,2} ~ \alpha_{m+2,k,2} ~ \alpha_{m+2,k+1,2}^{-1} ~ \alpha_{m,k+1,2}^{-1} ~ \alpha_{m+1,k+1,2} ~ \alpha_{m+1,k,2}^{-1} =1.$$
	
	Note that applying the relations: $ \alpha_{m+1,k,2} ~ \alpha_{m+2,k,2}^{-1} ~ \alpha_{m,k,2}^{-1} ~ =1$, all the other relations above become trivial relations. Hence we get the following presentation for $(SG_3')^{ab}:$
	
$$\langle ~ \alpha_{m,k,2},~ m, k \in \Z ~ | ~ \alpha_{m+2,k,2}=\alpha_{m+1,k,2} ~ \alpha_{m,k,2}^{-1}~, ~ [\alpha_{m,k,2},~\alpha_{m',k',2}] = 1, ~ \forall m, m', k, k' \in \Z  ~ \rangle,$$
	where $[x,y] = x^{-1} y^{-1} x y .$
	
	Finally, we note that using the relations $\alpha_{m+2,k,2}=\alpha_{m+1,k,2} ~ \alpha_{m,k,2}^{-1}$, for each $k \in \Z,$ we can express any $\alpha_{m,k,2}$ in terms of $\alpha_{0,k,2}$ and $\alpha_{1,k,2}$. So we remove all $\alpha_{m,k,2}$ with $m \ne 0, 1,$ from the set of generators after replacing them in terms of $\alpha_{0,k,2}$ and $\alpha_{1,k,2}$ in all the surviving defining relations. And we find that $(SG_3')^{ab}$ has the following presentation:
	$$\langle ~ \alpha_{0,k,2}, ~ \alpha_{1,k,2},~ k \in \Z ~ | ~ [\alpha_{m,k,2},~\alpha_{m',k',2}] = 1, ~ \forall m, m' \in \{0,1\},~ \forall k, k' \in \Z  ~ \rangle,$$
	which is a presentation for the free abelian group of countably infinite rank, $~ \bigoplus \limits_{i=1}^{\infty} \Z ~$.
	
	Hence the proof is complete. 
	\end{proof}

We have the following lemmas.

\begin{lemma}\label{sgnlemma6}
	$SG_3'$ is not perfect.
\end{lemma}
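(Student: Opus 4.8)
The plan is to leverage \lemref{sgnlemma5}, which has just established that $SG_3'/SG_3''$ is isomorphic to the free abelian group of countably infinite rank $\bigoplus_{i=1}^\infty \Z$. The definition of perfectness is that a group $G$ equals its own commutator subgroup, equivalently that its abelianization $G/G'$ is trivial. Applied to the group $SG_3'$, perfectness would mean that $(SG_3')^{ab} = SG_3'/SG_3''$ is the trivial group. First I would observe that $SG_3''$ is precisely the commutator subgroup of $SG_3'$, so $SG_3'/SG_3''$ is by definition the abelianization of $SG_3'$.

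The argument is then a one-line contradiction. Since \lemref{sgnlemma5} shows that $SG_3'/SG_3'' \cong \bigoplus_{i=1}^\infty \Z$, this abelianization is visibly nontrivial --- in fact it is infinite (indeed of infinite rank). A perfect group has trivial abelianization, so $SG_3'$ cannot be perfect. I would phrase this directly: if $SG_3'$ were perfect, then $SG_3' = SG_3''$, forcing $SG_3'/SG_3'' = \langle 1 \rangle$, which contradicts the isomorphism $SG_3'/SG_3'' \cong \bigoplus_{i=1}^\infty \Z$ of \lemref{sgnlemma5}.

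I do not anticipate any genuine obstacle here, since all the real work has been done in computing the abelianization in \lemref{sgnlemma5}; the present lemma is an immediate corollary. The only point requiring a moment's care is the bookkeeping of the derived-series notation: one must confirm that $SG_3''$ as it appears in \lemref{sgnlemma5} denotes $[SG_3', SG_3']$, the commutator subgroup of $SG_3'$, so that $SG_3'/SG_3''$ really is the abelianization of $SG_3'$ and not some other subquotient. Given the notation $G'' = [G', G']$ used consistently in the paper, this identification is immediate. Hence the proof reduces to citing \lemref{sgnlemma5} and invoking the elementary equivalence between perfectness and triviality of the abelianization.
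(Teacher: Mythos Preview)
Your proposal is correct and follows essentially the same approach as the paper: both cite \lemref{sgnlemma5} to conclude that $SG_3'/SG_3''$ is nontrivial, whence $SG_3'$ is not perfect.
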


\begin{proof}
	From \lemref{sgnlemma5}, we know that $SG_3'/SG_3''$ is not trivial. Hence $SG_3' \ncong SG_3''$. Hence $SG_3'$ is not perfect.
\end{proof}

\begin{lemma}\label{sgnlemma7}
	$SG_3'$ is not finitely generated.
\end{lemma}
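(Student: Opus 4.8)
The plan is to leverage \lemref{sgnlemma5} together with the elementary fact that finite generation is inherited by quotient groups. If a group $G$ is finitely generated, then every quotient $G/N$ is finitely generated, since the images of a finite generating set for $G$ generate $G/N$. I would apply this principle with $G = SG_3'$ and $N = SG_3''$, the second derived subgroup.

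First I would observe that the abelianization $SG_3'/SG_3''$ is a quotient of $SG_3'$. By \lemref{sgnlemma5}, this quotient is isomorphic to the free abelian group of countably infinite rank, $\bigoplus_{i=1}^{\infty} \Z$. Since a free abelian group of infinite rank is \emph{not} finitely generated (any finite subset lies in a finite-rank direct summand and hence cannot generate the whole group), the quotient $SG_3'/SG_3''$ fails to be finitely generated.

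Finally I would argue by contrapositive: were $SG_3'$ finitely generated, its quotient $SG_3'/SG_3''$ would be finitely generated as well, contradicting the previous paragraph. Therefore $SG_3'$ is not finitely generated, completing the proof.

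There is essentially no obstacle remaining at this stage: the entire computational difficulty has already been absorbed into \lemref{sgnlemma5}, where the abelianization of $SG_3'$ was explicitly identified with $\bigoplus_{i=1}^{\infty} \Z$. Once that identification is in hand, the present lemma reduces to the standard observation that infinite generation of an abelian quotient forces infinite generation of the ambient group.
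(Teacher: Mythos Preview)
Your proof is correct and follows exactly the same route as the paper: invoke \lemref{sgnlemma5} to identify the abelianization of $SG_3'$ with $\bigoplus_{i=1}^{\infty}\Z$, and then use that finite generation passes to quotients. The only difference is that you spell out the elementary facts about quotients and infinite-rank free abelian groups in slightly more detail than the paper does.
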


\begin{proof}
	From \lemref{sgnlemma5}, it follows that $~ \bigoplus \limits_{i=1}^{\infty} \Z ~$ is a quotient group of $SG_3'$. As $~ \bigoplus \limits_{i=1}^{\infty} \Z ~$ is not finitely generated, it follows that $SG_3'$ is also not finitely generated.
\end{proof}

\begin{lemma}\label{sgnlemma8}
	$SG_4'$ is not finitely generated, and also not perfect.
\end{lemma}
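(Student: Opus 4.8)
The plan is to prove the exact analogue of \lemref{sgnlemma5} for $SG_4'$: I would compute the abelianization $(SG_4')^{ab} = SG_4'/SG_4''$ and show that it is the free abelian group of countably infinite rank $\bigoplus_{i=1}^{\infty} \Z$. Both assertions of the lemma then follow at once, since a group whose abelianization is not finitely generated cannot itself be finitely generated, and a group with nontrivial abelianization cannot be perfect. This is precisely the route by which \lemref{sgnlemma6} and \lemref{sgnlemma7} were deduced from \lemref{sgnlemma5} in the case $n=3$, so the present lemma is handled by a single abelianization computation.

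First I would specialize the presentation of \lemref{sgnlemma2} to $n=4$. As the generators $\alpha_j,\beta_{m,j}$ occur only for $3 \le j \le n-1 = 3$, the sole letters surviving besides $\alpha_{m,k,2},\beta_{m,k,2}$ are $\alpha_3$ and $\beta_{m,3}$, and every relation indexed by $j \ge 4$, or by indices $i,j \ge 3$ with $|i-j|>1$, becomes vacuous. The relations that remain are \eqref{gvbn06}, \eqref{gvbn010}, \eqref{gvbn011}, \eqref{gvbn013}, \eqref{gvbn014}, \eqref{gvbn016}, \eqref{gvbn017}, \eqref{gvbn020}, and \eqref{gvbn021}. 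I would then abelianize, writing the group additively.

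Next I would collapse the extra letters. Relation \eqref{gvbn06} makes $\beta_{m,3}$ independent of $m$; relation \eqref{gvbn011} combined with \eqref{gvbn010} forces $\alpha_3 = 0$; and comparing \eqref{gvbn014} with \eqref{gvbn016} forces $\beta_{m,3} = 0$. With $\alpha_3$ and $\beta_{m,3}$ trivial, relation \eqref{gvbn016} expresses each $\beta_{m,k,2}$ in terms of the $\alpha_{m,k,2}$, and I would check that the remaining relations \eqref{gvbn013}, \eqref{gvbn017}, and \eqref{gvbn020} then reduce to consequences of the recurrence in \eqref{gvbn010}, exactly as in the $n=3$ computation. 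What is left is the abelian group generated by the $\alpha_{m,k,2}$ subject only to $\alpha_{m+2,k,2} = \alpha_{m+1,k,2} - \alpha_{m,k,2}$. For each fixed $k$ this determines every $\alpha_{m,k,2}$ from $\alpha_{0,k,2}$ and $\alpha_{1,k,2}$ while imposing no relation between these two, and distinct values of $k$ stay independent; hence $(SG_4')^{ab} \cong \bigoplus_{k\in\Z}\Z^2 \cong \bigoplus_{i=1}^{\infty}\Z$.

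The step requiring care is the bookkeeping just described: I must confirm that the four relations involving $\alpha_3$ and $\beta_{m,3}$, namely \eqref{gvbn011}, \eqref{gvbn014}, \eqref{gvbn017}, and \eqref{gvbn021}, do nothing beyond killing $\alpha_3$ and $\beta_{m,3}$ and then degenerating into identities, so that no hidden relation is introduced among the $\alpha_{0,k,2},\alpha_{1,k,2}$. Once that is verified the computation reduces verbatim to the one in \lemref{sgnlemma5}, and with the abelianization identified as $\bigoplus_{i=1}^{\infty}\Z$, the non-finite-generation and non-perfectness of $SG_4'$ follow as in \lemref{sgnlemma7} and \lemref{sgnlemma6}.
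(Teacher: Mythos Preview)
Your approach is correct and yields the same conclusion, but the paper takes a shorter route. Rather than abelianizing $SG_4'$ directly, the paper simply observes that if one sets $\alpha_3 = 1$ and $\beta_{m,3} = 1$ in the presentation of $SG_4'$ coming from \lemref{sgnlemma2}, the resulting presentation is \emph{exactly} the presentation of $SG_3'$; hence $SG_3'$ is a quotient of $SG_4'$ at the level of groups, not merely of abelianizations. Both conclusions then follow by citing \lemref{sgnlemma6} and \lemref{sgnlemma7} directly, since finite generation and perfectness pass to quotients. Your computation effectively rediscovers this quotient after abelianizing: the verifications you outline (that $\alpha_3$ and $\beta_{m,3}$ die and that the surviving relations collapse to those of $(SG_3')^{ab}$) amount to checking that the abelianization of the quotient map is an isomorphism. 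The paper's version buys brevity and avoids repeating the bookkeeping of \lemref{sgnlemma5}; your version has the minor advantage of computing $(SG_4')^{ab}$ explicitly rather than only bounding it from below.
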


\begin{proof}
	
	From \lemref{sgnlemma2} we have the following presentation for $SG_4'$:\\
	
	set of generators: $ \{ ~ \alpha_{m,k,2}, ~ \alpha_3, ~ \beta_{m,k,2}, ~ \beta_{m,3} ~ | ~ m, k \in \Z ~ \} $,\\
	
	set of defining relations: for all $m, k \in \Z,$
	$$\beta_{m+1,3} ~ \beta_{m,3}^{-1} ~ =1 ;$$
	$$\alpha_{m+1,k,2} ~ \alpha_{m+2,k,2}^{-1} ~ \alpha_{m,k,2}^{-1} ~ =1 ; $$
	$$\alpha_{m,k,2} ~ \alpha_{3} ~ \alpha_{m+2,k,2} ~ \alpha_{3}^{-1} ~ \alpha_{m+1,k,2}^{-1} ~ \alpha_{3}^{-1} ~ =1 ; $$
	$$\alpha_{m,k+1,2} ~ \beta_{m+2,k,2}^{-1} ~ \alpha_{m,k,2}^{-1} ~ =1 ; $$
	$$\beta_{m,k,2} ~ \alpha_{3} ~ \alpha_{m+1,k+1,2} ~ \beta_{m+2,3}^{-1} ~ \alpha_{m+1,k,2}^{-1} ~ \alpha_{3}^{-1} ~  =1 ; $$
	$$ \beta_{m,k,2} ~ \alpha_{m+1,k+1,2} ~ \alpha_{m+1,k,2}^{-1} ~ =1 ; $$
	$$ \beta_{m,3} ~ \alpha_{m,k+1,2} ~ \alpha_{3} ~ \beta_{m+2,k,2}^{-1} ~ \alpha_{3}^{-1} ~ \alpha_{m,k,2}^{-1} ~ = 1; $$
	$$ \alpha_{m,k,2} ~ \beta_{m+1,k,2} ~ \alpha_{m,k+1,2}^{-1} ~ \beta_{m,k,2}^{-1} = 1; $$
	$$ \alpha_{3} ~ \beta_{m+1,3} ~ \alpha_{3}^{-1} ~ \beta_{m,3}^{-1} = 1. $$

	Note that, if we replace all $\alpha_3$ and $\beta_{m,3}$ by $1$ in the above presentation, we get exactly same presentation as we have for $SG_3'$.  Hence, $SG_3'$ is isomorphic to the quotient of $SG_4'$ by the normal subgroup generated by the set $\{~ \alpha_3, ~ \beta_{m,3} ~|~ m \in \Z ~ \}$.\\
	
	Using \lemref{sgnlemma7} we conclude that $SG_4'$ is not finitely generated.\\
	
	Using \lemref{sgnlemma6} we conclude that $SG_4'$ is not perfect.
\end{proof}

\subsection*{Proof of \thmref{sgnmainth}} is obtained by combining \lemref{sgnlemma3}, \lemref{sgnlemma4}, \lemref{sgnlemma6}, \lemref{sgnlemma7}, and \lemref{sgnlemma8}.


\begin{thebibliography}{99}

\bibitem[B92]{bae}
John C. Baez. 
\newblock  Link invariants of finite type and perturbation theory
\newblock {\em Lett. Math. Phys.}, 26(1):43--51, 1992.

\bibitem[Ba04]{ba1}
Valerij G. Bardakov.
\newblock  The virtual and universal braids
\newblock {\em Fund. Math.}, Volume 184,  Pages 1--18, 2004.

\bibitem[BB09]{ba}
Valerij G. Bardakov and Paolo Bellingeri.
\newblock Combinatorial properties of virtual braids.
\newblock {\em Topology Appl.}, Volume 156, Issue 6, Pages 1071-1082, 2009.

\bibitem[BB09]{bb}
Valerij G. Bardakov and Paolo Bellingeri.
\newblock Combinatorial properties of virtual braids.
\newblock {\em Topology Appl.}, Volume 156, Issue 6, Pages 1071-1082, 2009.

\bibitem[BGN18]{bgn1}
Valeriy~G. Bardakov, Krishnendu Gongopadhyay, and Mikhail~V. Neshchadim.
\newblock Commutator subgroups of virtual and welded braid groups.
\newblock {\em Int J. Algebra Comput.}, Online first, doi: 10.1142/S0218196719500127,  2018. 
\bibitem[BB05]{bibr}
Joan~S. Birman and Tara~E. Brendle.
\newblock Braids: a survey.
\newblock In {\em Handbook of knot theory}, pages 19--103. Elsevier B. V.,
  Amsterdam, 2005.

\bibitem[Bel04]{bel1}
Paolo Bellingeri.
\newblock On presentations of surface braid groups.
\newblock {\em J. Algebra}, 274(2):543--563, 2004.


\bibitem[Bel12]{bel}
Paolo Bellingeri.
\newblock {\em Topological generalizations of braid groups: combinatoric
properties and applications to knot theory}. 
\newblock M\'emoire d'HDR soutenue \`a Caen le 10 d\'ecembre 2012.

\bibitem[BG12]{bel2}
Paolo Bellingeri and Sylvain Gervais.
\newblock Surface framed braids.
\newblock {\em Geom. Dedicata}, 159:51--69, 2012.

\bibitem[Bi93]{bir}
Joan S. Birman. 
\newblock New points of view in knot theory. 
\newblock {\em Bull. Amer. Math. Soc. (N. S.)}, 28(2):253--287, 1993.

\bibitem[Dam17]{dam}
Celeste Damiani.
\newblock A journey through loop braid groups.
\newblock {\em Expo. Math.}, 35(3):252--285, 2017.

\bibitem[DG18]{dg1}
Soumya Dey and Krishnendu Gongopadhyay.
\newblock Commutator subgroups of welded braid groups.
\newblock {\em Topology Appl.}, 237:7--20, 2018.

\bibitem[Fan15]{fang}
Xin Fang.
\newblock Generalized virtual braid groups, quasi-shuffle product and quantum
  groups.
\newblock {\em Int. Math. Res. Not. IMRN}, (6):1717--1731, 2015.

\bibitem[FKV05]{fkv} 
Roger Fenn,  Louis H. Kauffman, and  Vassily O. Manturov, 
\newblock Virtual knot theory---unsolved problems.
\newblock {\em Fund. Math. }, 188:293--323, 2005.

\bibitem[FKR98]{fkr}
Roger Fenn, Ebru Keyman, and Colin Rourke. 
\newblock The singular braid monoid embeds in a group.
\newblock {\em J. Knot Theory Ramifications}, Volume 7, no. 7, 881--892, 1998.

\bibitem[GL69]{gl}
E.~A. Gorin and V.~Ja. Lin.
\newblock Algebraic equations with continuous coefficients, and certain
  questions of the algebraic theory of braids.
\newblock {\em Mat. Sb. (N.S.)}, 78 (120):579--610, 1969.

\bibitem[Kau99]{lk1}
Louis~H. Kauffman.
\newblock Virtual knot theory.
\newblock {\em European J. Combin.}, 20(7):663--690, 1999.

\bibitem[KL04]{kala}
Louis~H. Kauffman and Sofia Lambropoulou.
\newblock Virtual braids.
\newblock {\em Fund. Math.}, 184:159--186, 2004.

\bibitem[MKS04]{mks}
Wilhelm Magnus, Abraham Karrass, and Donald Solitar.
\newblock {\em Combinatorial group theory}.
\newblock Dover Publications, Inc., Mineola, NY, second edition, 2004.
\newblock Presentations of groups in terms of generators and relations.

\bibitem[MR]{mr}
Jamie Mulholland and Dale Rolfsen.
\newblock Local indicability and commutator subgroups of {A}rtin groups.
\newblock {\em ArXiv}, arXiv:math/0606116.

\bibitem[Ore12]{orevkov}
S.~Yu. Orevkov.
\newblock On the commutants of {A}rtin groups.
\newblock {\em Dokl. Akad. Nauk}, 442(6):740--742, 2012.

\bibitem[Par09]{paris}
Luis Paris.
\newblock Braid groups and {A}rtin groups.
\newblock In {\em Handbook of {T}eichm\"uller theory. {V}ol. {II}}, volume~13
  of {\em IRMA Lect. Math. Theor. Phys.}, pages 389--451. Eur. Math. Soc.,
  Z\"urich, 2009.

\bibitem[Rou04]{rou2}
S.~K. Roushon.
\newblock Topology of 3-manifolds and a class of groups. {II}.
\newblock {\em Bol. Soc. Mat. Mexicana (3)}, 10(Special Issue):467--485, 2004.

\bibitem[Sav93]{sa}
A.~G. Savushkina.
\newblock On the commutator subgroup of the braid group.
\newblock {\em Vestnik Moskov. Univ. Ser. I Mat. Mekh.}, (6):11--14, 118
  (1994),  translation in
{\em Moscow Univ. Math. Bull.},  48(6): 9–11,  1993.

\bibitem[Ver98]{vve0}
V.~V. Vershinin.
\newblock On homological properties of singular braids.
\newblock {\em  Trans. Amer. Math. Soc. }, 350(6): 2431--2455, 1998. 

\bibitem[Ver06]{vve2}
V.~V. Vershinin.
\newblock Braids, their properties and generalizations.
\newblock In {\em Handbook of algebra. {V}ol. 4}, volume~4 of {\em Handb.
  Algebr.}, pages 427--465. Elsevier/North-Holland, Amsterdam, 2006.

\bibitem[Ver14]{vve1}
V.~V. Vershinin.
\newblock About presentations of braid groups and their generalizations.
\newblock In {\em Knots in {P}oland. {III}. {P}art 1}, volume 100 of {\em
  Banach Center Publ.}, pages 235--271. Polish Acad. Sci. Inst. Math., Warsaw,
  2014.

\bibitem[Zar18]{mz}
Matthew C.~B. Zaremsky.
\newblock Symmetric automorphisms of free groups, {BNSR}-invariants, and
  finiteness properties.
\newblock {\em Michigan Math. J.}, 67(1):133--158, 2018.

\bibitem[Zin75]{zinde}
V.~M. Zinde.
\newblock Commutants of {A}rtin groups.
\newblock {\em Uspehi Mat. Nauk}, 30(5(185)):207--208, 1975.

\end{thebibliography}

\end{document}